\documentclass[10pt,regno]{amsart}
\usepackage{amsmath,amsthm,amsfonts,amssymb,amscd,overpic,mathrsfs}
\usepackage{euscript,graphicx}
\usepackage{epstopdf,rotating}
\usepackage{color}
\newtheorem{defi}{Definition}[section]
\newtheorem{teorema}[defi]{Theorem}
\newtheorem{lema}[defi]{Lemma}
\newtheorem{prop}[defi]{Proposition}

\newtheorem{cor}[defi]{Corollary}

\newtheorem*{remark*}{Remark}

\newtheorem{claim}[defi]{Claim}

\newcommand{\texto}{\textrm}
\newcommand{\ovl}{\overline}

\newcommand{\disp}{\displaystyle}
\newcommand{\N}{\mathbb{N}}
\newcommand{\R}{\mathbb{R}}

\newcommand{\C}{\mathbb{C}}

\DeclareMathOperator{\DD}{DD}
\DeclareMathOperator{\hD}{hD}
\DeclareMathOperator{\Isom}{Isom}
\DeclareMathOperator{\diam}{diam}

\author[S. Campos]{Sara Campos}
 \address{Department of Mathematics, 
 Federal University of Juiz de Fora, 
Campus Universit\'ario - Bairro Martelos, 
Juiz de Fora  36036-900, MG, Brazil}
 \email{sara.campos@edu.ufjf.br}
\author[K. Gelfert]{Katrin~Gelfert}
 \address{Institute of Mathematics, Federal University of Rio de Janeiro, Av. Athos da Silveira Ramos 149, Cidade Universit\'aria - Ilha do Fund\~ao, Rio de Janeiro 21945-909, RJ, Brazil}
\email{gelfert@im.ufrj.br}
\thanks{We are grateful to Juan Rivera-Letelier for enlightening conversations and pointing out~\cite{RivShe:} and to Lasse Rempe-Gillen for useful comments. SC was supported by CAPES  (Brazil) and KG   by CNPq (Brazil).}
\keywords{Topological entropy, Hausdorff dimension, exceptional sets}
\title[Exceptional sets]{Exceptional sets\\for nonuniformly expanding maps}
\subjclass[2000]{
37B40, 
37C45, 
37D25, 
37F35, 
37F10 
37E05, 
}

\begin{document}
\maketitle
\begin{abstract}%
Given a rational map of the Riemann sphere and a subset $A$ of its Julia set, we study the $A$-exceptional set, that is, the set of points whose orbit does not accumulate at $A$. 
We prove that if the topological entropy of $A$ is less than the topological entropy of the full system then the $A$-exceptional set has full topological entropy. 
Furthermore, if the Hausdorff dimension of $A$ is smaller than the dynamical dimension of the system  then the Hausdorff dimension of the $A$-exceptional set is larger than or equal to the dynamical dimension, with equality in the particular case when the dynamical dimension and the Hausdorff dimension coincide.

We discuss also the case of a general conformal $C^{1+\alpha}$ dynamical system and, in particular,  certain  multimodal interval maps on their Julia set. 
\end{abstract}

\section{Introduction}
Consider a compact metric space  $(X,d)$ and a continuous transformation $f\colon X \to X$. Let $W\subset X$ be  $f$-invariant, that is, $f(W) = W$.  Given $A\subset W$, the \emph{$A$-exceptional set} in $W$ (with respect to $f|_W$) is defined to be the set
\[
E^+_{f|W}(A) := \{x\in W\colon  \overline{\mathcal{O}_f(x)}\cap A = \emptyset\},
\]
where $\mathcal{O}_f(x):= \{f^k(x)\colon  k\in \N \cup \{0\}\}$ denotes the forward orbit of $x$ by $f$. In other words, $E^+_{f|W}(A)$ is the set of points in $W$ whose forward orbit does not accumulate at $A$. In this paper we study the ``size'' of exceptional sets in terms of their topological entropy and their Hausdorff dimension. We will consider as dynamical systems rational functions of the Riemann sphere which include those with parabolic points and critical points. 

The following is our first main result stated in terms of topological entropy (we recall its definition in Section~\ref{Ent}). 
 
\begin{teorema}\label{teoentropy}
Let $f\colon \ovl{\C} \to \ovl{\C}$ be a rational function of degree $d\ge2$ on the Riemann sphere and let $J=J(f)$ be its Julia set. 

If  $A\subset J$ satisfies $h(f|_J,A) < h(f|_J)=\log d$, then  
$$
h( f|_J,E^+_{f|J}(A)) = h(f|_J)=\log d.
$$
\end{teorema}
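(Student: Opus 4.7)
The plan is to approximate the full entropy $\log d=h(f|_J)$ from below by entropies of compact, expanding, forward-invariant subsets of $J$, and then inside each such subsystem to isolate those orbits that do not cluster on $A$. The quantitative engine is an entropy gap between $A$ and a well-chosen expanding sub-repeller.

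First I would invoke the classical approximation of rational maps by expanding sub-repellers inside the Julia set: for every $\epsilon>0$ there exists a compact $f$-invariant expanding set $\Lambda\subset J$ admitting a Markov partition and topologically conjugate, via the induced coding, to a one-sided topological Markov shift $\sigma\colon\Sigma\to\Sigma$ with $h(f|_\Lambda)=h(\sigma)>\log d-\epsilon$. I choose $\epsilon$ small enough that $h(f|_\Lambda)>h(f|_J,A)$; this is the entropy gap that fuels the rest of the argument.

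The heart of the proof is a bound on the entropy of $R_\Lambda:=\Lambda\setminus E^+_{f|J}(A)=\{x\in\Lambda\colon\ovl{\mathcal{O}_f(x)}\cap A\ne\emptyset\}$. I would prove
$$h(f|_J,R_\Lambda)\le h(f|_J,A),$$
using the Bowen/Pesin covering definition of topological entropy for non-compact sets. An orbit in $R_\Lambda$ must, at arbitrarily large times $n$, have $f^n(x)$ within any prescribed $\delta>0$ of some point of $A$. Bounded distortion of $f$ on the Markov cylinders of $\Lambda$ together with uniform expansion let one transfer small balls around points of $A$ back to dynamical Bowen $(n,\epsilon')$-balls, so that any efficient cover of $A$ pulls back to an efficient dynamical cover of the points of $R_\Lambda$ that shadow $A$ at time $n$. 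Organising the $s$-weighted sums produced by these covers reproduces, for $s>h(f|_J,A)$, a finite defining quantity, which gives the inequality.

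By the countable stability of Bowen entropy one then has
$$h(f|_J,\Lambda)=\max\bigl\{h(f|_J,R_\Lambda),\,h(f|_J,\Lambda\cap E^+_{f|J}(A))\bigr\},$$
and the entropy gap forces $h(f|_J,\Lambda\cap E^+_{f|J}(A))=h(f|_\Lambda)>\log d-\epsilon$. Since $\Lambda\cap E^+_{f|J}(A)\subset E^+_{f|J}(A)$, letting $\epsilon\to0$ gives $h(f|_J,E^+_{f|J}(A))\ge\log d$, and the reverse inequality is automatic because $E^+_{f|J}(A)\subset J$. The main obstacle is the covering estimate for $R_\Lambda$: one must coherently organise dynamical balls built around points of $A$ so that the entropic contribution of $R_\Lambda$ is genuinely controlled by $h(f|_J,A)$ and not inflated by the richer symbolic geometry of $\Lambda$. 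The acknowledgment of Rivera-Letelier--Shen suggests precisely such a lemma (about orbits clustering on a low-entropy set) is imported from there.
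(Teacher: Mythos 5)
Your first step---approximating $h(f|_J)=\log d$ from below by the entropies of expanding sub-repellers $\Lambda\subset J$ carrying a Markov/subshift structure---agrees with the paper, which obtains such $\Lambda$ via a Katok-type construction (Theorem~\ref{Katrin1}). Where you diverge, and where the argument breaks, is the pivotal inequality you assert: that
\[
	R_\Lambda := \Lambda\setminus E^+_{f|J}(A) = \{x\in\Lambda\colon \overline{\mathcal{O}_f(x)}\cap A\ne\emptyset\}
\]
satisfies $h(f|_J,R_\Lambda)\le h(f|_J,A)$. This is false. The condition that $\overline{\mathcal O_f(x)}$ meets $A$ only constrains the orbit at a sparse, unbounded set of times and imposes nothing in between, so $R_\Lambda$ is typically as large as $\Lambda$ itself in the sense of Bowen entropy. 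Concretely, take $f(z)=z^2$, $J=S^1$ (already an expanding repeller with $h=\log2$), and $A=\{1\}$, so $h(f|_J,A)=0$. Then $R_J$ consists of all binary expansions containing arbitrarily long blocks of $0$'s (or of $1$'s) infinitely often; this set has full Bowen entropy $\log 2$, even though $E^+_{f|J}(\{1\})$ also has full entropy. Under the hypothesis $h(\sigma,B)<h(\sigma)$, both $E^+_\sigma(B)$ and its complement generically have full entropy, and the countable-stability identity $h(\Lambda)=\max\{h(R_\Lambda),\,h(\Lambda\cap E^+)\}$ gives no information about the second term once $h(R_\Lambda)=h(\Lambda)$.

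The covering heuristic is exactly where this collapses: pulling back a $\delta$-ball around $A$ at time $n$ does produce a small Bowen ball in $\Lambda$ by bounded distortion, but one must do this for every $n$ and for every way the orbit can wander before and after the close approach; summing the resulting $s$-weights over all $n\in\mathbb N$ does not converge for $s$ just above $h(f|_J,A)$, because the orbit segments away from $A$ carry the full symbolic complexity of $\Lambda$ and are not charged by any cover of $A$. What is actually needed, and what the paper imports, is Dolgopyat's Theorem~\ref{Dolgoteoshift}: inside a subshift of finite type, $h(\sigma,B)<h(\sigma)$ already forces the exceptional set $E^+_\sigma(B)$ itself to have full entropy. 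That result is a genuinely delicate \emph{positive} construction of a large invariant avoiding set, not an upper bound on the avoided set, and it cannot be replaced by a distortion/covering estimate of the kind you sketch. (The Rivera-Letelier--Shen reference in the acknowledgments concerns the dimension results---equality of dynamical and hyperbolic dimension needed for Corollary~\ref{teoprinc2new}---not a lemma of the type you are hoping for here.)
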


The above result will be a consequence of a corresponding statement for entropy of a continuous shift-equivalent transformation (see Proposition~\ref{proph}).

The second result in terms of the Hausdorff dimension $\dim_{\rm H}$ uses  canonical  concepts which we briefly recall (see Section~\ref{HD} for more details).  
Given a $f$-invariant probability measure $\mu$, the {\it Hausdorff dimension of $\mu$} is defined by
$$
\dim_{\rm H}\mu := \inf \{\dim_{\rm H}Y\colon  Y\subset X \text{ and } \mu(Y) = 1\}.
$$
The {\it dynamical dimension} of $f$ is defined by   
\begin{equation}\label{def:DD}
	\DD(f|_X) := \sup_\mu\dim_{\rm H}\mu,
\end{equation}
where the supremum is taken over all ergodic measures $\mu$ with positive entropy. We will consider only maps where such measures do exist and where hence $\DD$ is well defined. Note that clearly we have $\DD(f|_X)\le \dim_{\rm H}X$. 

The following relation was established in the context of a general rational function $f$ of degree $\ge2$ of the Riemann sphere and $X=J(f)$ its Julia set (see~\cite[Chapter 12.3]{PU}) and will be fundamental for our approach. We have
\begin{equation}\label{eq:dyndimdims}
	\DD(f|_{J(f)})
	= \hD(f|_{J(f)}),
	\quad\text{ where }\quad
	\hD(f|_{J(f)})
	:=\sup_Y\dim_{\rm H}Y,
\end{equation}
where the supremum is taken over all conformal expanding repellers $Y\subset J(f)$  (we recall its definition in Section~\ref{chirepellers}), the latter number is also called the \emph{hyperbolic dimension} of $J(f)$.

\begin{teorema}\label{teoprinc}
Let $f\colon \ovl{\C} \to \ovl{\C}$ be a rational function of degree $\ge2$ on the Riemann sphere and let $J=J(f)$ be its Julia set. 

If $A\subset J$ satisfies $\dim_{\rm H} A < \DD(f|_{J})$, then  
$$
	\dim_{\rm H} E_{f|J}^+(A) \geq \DD(f|_{J}).
$$
\end{teorema}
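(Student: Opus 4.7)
The plan is to exploit the identity $\DD(f|_J)=\hD(f|_J)$ from \eqref{eq:dyndimdims} to reduce the problem to an estimate on a conformal expanding repeller, where the classical thermodynamic formalism for uniformly hyperbolic systems is available. Given $\epsilon>0$, it suffices to produce a conformal expanding repeller $Y\subset J$ satisfying $\dim_{\rm H} Y>\DD(f|_J)-\epsilon$ and $\dim_{\rm H} Y>\dim_{\rm H} A$, together with a compact set $Z\subset Y\cap E^+_{f|J}(A)$ of Hausdorff dimension at least $\dim_{\rm H} Y-\epsilon$. Sending $\epsilon\to 0$ then yields the desired lower bound; the equality assertion in the case $\DD(f|_J)=\dim_{\rm H} J$ is immediate, since $E^+_{f|J}(A)\subset J$.

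Such a $Y$ carries a finite Markov partition $\mathcal P$ inducing a coding $\pi\colon \Sigma_Y\to Y$ by a subshift of finite type which semiconjugates $\sigma$ to $f|_Y$. By uniform expansion and bounded distortion, every cell of the refinement $\mathcal P_n=\bigvee_{i=0}^{n-1}f^{-i}\mathcal P$ has diameter comparable to $e^{-n\chi}$ for a fixed $\chi>0$, and Bowen's formula identifies $s:=\dim_{\rm H} Y$ with the unique zero of $t\mapsto P_{f|Y}(-t\log|f'|)$. Fix $\rho>0$ with $\dim_{\rm H} A+\rho<s$. Because $\dim_{\rm H} A<s$, for any $r>0$ we may cover $A$ by finitely many balls of radius $r$ whose total $(\dim_{\rm H} A+\rho)$-content is arbitrarily small; let $U$ be the enlargement of this cover to an open neighborhood of $A$ of definite size, so that any cell of $\mathcal P_n$ disjoint from $U$ remains at a positive distance from $A$ under all $n-1$ forward iterates.

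Split $\mathcal P_n$ into the family $\mathcal F$ of cells whose projection meets $U$ and the family $\mathcal G$ of cells whose projection lies in $Y\setminus\overline U$. The cardinality bound $\#\mathcal F\lesssim e^{n\chi(\dim_{\rm H} A+\rho)}$ combined with $\dim_{\rm H} A+\rho<s$ shows that $\sum_{C\in\mathcal F}\diam(\pi(C))^t$ decays exponentially for every $t$ with $\dim_{\rm H} A+\rho<t\le s$, while $\sum_{C\in\mathcal P_n}\diam(\pi(C))^s\asymp 1$. Consequently the Bowen root of the $\sigma^n$-invariant subshift $\Sigma'\subset\Sigma_Y$ built from $\mathcal G$-labels (with transitions inherited from $\Sigma_Y$) differs from $s$ by at most $\epsilon$ once $n$ is large enough; applying Bowen's formula to $\Sigma'$ (or to a maximal irreducible component) yields a compact $f$-invariant set $Z:=\pi(\Sigma')\subset Y$ with $\dim_{\rm H} Z>s-\epsilon$. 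By construction $f^{kn}(x)\in Y\setminus\overline U$ for every $x\in Z$ and every $k\ge 0$, and by choice of $U$ all intermediate iterates $f^{kn+i}(x)$, $0\le i<n$, also stay uniformly away from $A$; hence $\overline{\mathcal O_f(x)}\cap A=\emptyset$ and $Z\subset E^+_{f|J}(A)$, as required.

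The main obstacle is the pressure-perturbation step: showing that forbidding the dimensionally thin family $\mathcal F$ shifts Bowen's root by no more than $\epsilon$. This is a quantitative form of the stability of the hyperbolic dimension of a conformal expanding repeller under removal of a subset of strictly smaller Hausdorff dimension, and it is precisely where the hypothesis $\dim_{\rm H} A<\DD(f|_J)$ is used essentially. A secondary technicality is arranging that $\Sigma'$ contains an irreducible component whose pressure is close to the full pressure on $\mathcal G$; this can be ensured by starting from a sufficiently refined Markov partition so that the transition graph on $\mathcal G$ remains richly connected once $\mathcal F$ is removed.
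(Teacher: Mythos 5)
Your route is genuinely different from the paper's. The paper combines three black boxes: Katok-type $(\chi,\epsilon)$-repellers $W_n$ approximating the dynamical dimension (Lemma~\ref{lemaqeaprox}), Dolgopyat's entropy theorem for subshifts (Theorem~\ref{Dolgoteoshift}, applied through Proposition~\ref{proph} and Section~\ref{Aset}), and the entropy--dimension translation on $(\chi,\epsilon)$-repellers (Corollary~\ref{proplema 2.0cor} and Lemma~\ref{afirm2.1.1}). You instead try to work directly with Hausdorff dimension inside a single uniformly hyperbolic repeller $Y$, by deleting a dimensionally thin collection of cylinders and arguing via a pressure perturbation that Bowen's root barely moves. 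If that step held, you would in effect be re-deriving a dimension analogue of Dolgopyat's theorem rather than citing it, which would be a more self-contained argument. But the step does not hold as stated, and the gap is substantive.

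Concretely, the problem is the claim ``$Z\subset E^+_{f|J}(A)$''. You define $\mathcal G$ as the level-$n$ cylinders $C$ with $\pi(C)\subset Y\setminus\overline U$, and build a $\sigma^n$-invariant subshift $\Sigma'$ out of $\mathcal G$-blocks. For $x=\pi(\omega)\in Z$ this only controls $f^{kn}(x)$, which lands in a $\mathcal G$-cell and hence in $Y\setminus\overline U$. For $0<i<n$, the point $f^{kn+i}(x)$ lies in the level-$(n-i)$ cylinder $\pi\bigl([\omega_{kn+i},\ldots,\omega_{(k+1)n-1}]\bigr)$, which is a suffix of the $\mathcal G$-word and is constrained neither to lie in $Y\setminus\overline U$ nor to stay away from $A$; for $i$ near $n$ this is an entire level-one Markov cell and can certainly meet $A$. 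So ``by choice of $U$ all intermediate iterates stay uniformly away from $A$'' does not follow from the stated definition of $\mathcal G$, and $Z$ need not be contained in the exceptional set.

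The natural repair -- forbid every level-one symbol $w$ whose Markov cell meets $U$, i.e.\ impose the constraint coordinate-by-coordinate instead of on $n$-blocks -- exposes a second tension in the argument. For the ``intermediate iterates'' claim you need $U$ to be an $n$-independent neighborhood of $A$ (``definite size''); but then the number of level-$n$ cylinders hitting $U$ grows like $e^{n\chi\,\dim_{\rm H}(Y\cap U)}$, not like $e^{n\chi(\dim_{\rm H}A+\rho)}$, since the conformal measure of a definite-size open $U\cap Y$ is a fixed positive fraction. The cardinality bound $\#\mathcal F\lesssim e^{n\chi(\dim_{\rm H}A+\rho)}$ is only plausible when $U$ shrinks with $n$, precisely when it is too small to give a uniform gap between $Y\setminus\overline U$ and $A$. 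You flag the pressure-perturbation step as ``the main obstacle,'' and indeed this is exactly the escape-rate estimate that would require substantive new work; the paper avoids it by delegating that content to Dolgopyat's theorem (whose proof is itself a nontrivial measure construction, as recalled in the introduction) and then converting entropy to dimension via Lyapunov exponents on the $(\chi,\epsilon)$-repeller.
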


Theorem~\ref{teoprinc} immediately implies the following.%
\footnote{Note that until recently it was unknown whether there exist a map for which $\hD(f|_J)<\dim_{\rm H}J$. Avila and Lyubich in~\cite[Theorem D]{AviLyu:07} show that for so-called Feigenbaum maps with periodic combinatorics whose Julia set has positive area one has $\hD(f|_J)<\dim_{\rm H}J=2$. They provide examples in~\cite{AviLyu:}. }

\begin{cor}\label{teoprinc2new}
Let $f\colon \ovl{\C} \to \ovl{\C}$ be a rational function of degree $\ge2$ on the Riemann sphere  and let $J=J(f)$ be its Julia set. Assume that we have 
\begin{equation}\label{eq:equalityAvila}
	\DD(f|_J)= \dim_{\rm H}J.
\end{equation}

If $A\subset J$ satisfies $\dim_{\rm H} A <  \dim_{\rm H}{J}$ then
$$
\dim_{\rm H} E_{f|J}^+(A) = \dim_{\rm H} J.
$$
\end{cor}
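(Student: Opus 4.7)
The plan is to deduce Corollary~\ref{teoprinc2new} directly from Theorem~\ref{teoprinc}, with the hypothesis~\eqref{eq:equalityAvila} doing essentially all the work. The idea is that under the equality $\DD(f|_J)=\dim_{\rm H}J$, the lower bound supplied by Theorem~\ref{teoprinc} becomes sharp, and the matching upper bound is automatic from monotonicity of Hausdorff dimension under set inclusion.

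Concretely, I would proceed in two short steps. First, I would verify that the hypothesis of Theorem~\ref{teoprinc} is satisfied: assumption~\eqref{eq:equalityAvila} is precisely $\DD(f|_J)=\dim_{\rm H}J$, so the given condition $\dim_{\rm H}A<\dim_{\rm H}J$ translates into $\dim_{\rm H}A<\DD(f|_J)$. Applying Theorem~\ref{teoprinc} I would then obtain
$$
\dim_{\rm H}E^+_{f|J}(A)\;\ge\;\DD(f|_J)\;=\;\dim_{\rm H}J.
$$

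Second, I would observe that $E^+_{f|J}(A)\subset J$ directly from the definition of the exceptional set, and since Hausdorff dimension is monotone under inclusion this immediately yields $\dim_{\rm H}E^+_{f|J}(A)\le\dim_{\rm H}J$. Combining the two inequalities delivers the claimed equality, completing the proof.

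There is no real obstacle here: the entire analytic content is packed into Theorem~\ref{teoprinc}, and the sole role of~\eqref{eq:equalityAvila} is to promote the general inequality $\DD(f|_J)\le\dim_{\rm H}J$ to an equality, so that the lower bound on the exceptional set matches the trivial upper bound coming from $E^+_{f|J}(A)\subset J$. The authors' footnote makes clear that this hypothesis is not automatic—it can fail for certain Feigenbaum maps with Julia set of positive area, by work of Avila and Lyubich—which is precisely why the corollary is formulated conditionally rather than unconditionally.
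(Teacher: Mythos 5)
Your proof is correct and is precisely the argument the paper intends: the authors state that Theorem~\ref{teoprinc} ``immediately implies'' the corollary, and your two steps (applying Theorem~\ref{teoprinc} under~\eqref{eq:equalityAvila} for the lower bound, and monotonicity from $E^+_{f|J}(A)\subset J$ for the upper bound) are exactly that immediate deduction.
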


We obtain an immediate conclusion in the particular case of an expansive map. For that recall that a continuous map $f\colon X \to X$ is {\it expansive} if there exists  $\delta > 0$ such that for each pair of distinct points $x,y\in X$ there is  $n\geq 1$ such that $d(f^n(x),f^n(y))\ge\delta$. 
By the Bowen-Manning-McCluskey formula, in the case of a rational function $f\colon J(f) \to J(f)$ which is expansive equality~\eqref{eq:equalityAvila} holds true (see~\cite[Theorem 3.4]{U}).
Recall that by~\cite[Theorem 4]{DenUrb:91} a rational function of degree $\ge2$ on its Julia set $J(f)$ is expansive (and hence~\eqref{eq:equalityAvila} is true) if, and only if, $J(f)$ does not contain critical points. 

Recent work by Rivera-Letelier and Shen~\cite{RivShe:} establishes~\eqref{eq:equalityAvila} for  a much wider class of  maps. In particular they show that for a rational map of degree $\ge2$ without neutral periodic points, and such that for each critical value $v$ of $f$ in $J(f)$ one has
\[
	\sum_{n=1}^\infty\frac{1}{\lvert (f^n)'(v)\rvert}<\infty
\] 
equalities~\eqref{eq:equalityAvila} hold true (see~\cite[Theorem II and Section 2.1]{RivShe:}) and  Corollary~\ref{teoprinc2new} applies. Note that, in particular,  this is true for Collet-Eckmann maps.

\medskip
Let us compare the main results with other previously known ones. 
Results of this sort have already a long history which  starts with the Jarnik-Besicovitch theorem (see~\cite{Jar:29}) which states that the set of badly approximable numbers\footnote{Recall that a real number $x$ is \emph{badly approximable} if there is a constant $C(x)$ such that for any reduced rational  $p/q$ we have $\lvert p/q -x\rvert>C(x)/q^2$.} in the interval $[0,1]$ is $1$. Observe that $x\in[0,1)$ is \emph{badly approximable} if, and only if, the forward orbit of $x$ relative to the Gauss continued fraction map $f\colon[0,1)\to[0,1)$ does not accumulate at $0$, that is, if $x$ does not belong to the $\{0\}$-exceptional set of points. Here $f$ is  defined by $f(x):=1/x-[1/x]$ if $x>0$, where $[y]$ denotes the integer part of $y$, and $f(0)=0$. This result is then an immediate consequence of the fact that for an expanding Markov map of the interval and any point $x_0$ the $\{x_0\}$-exceptional set has full Hausdorff dimension $1$.

In analogy, in the case of $f$ being an expanding $C^2$ map of a  Riemannian manifold $X$,  it is known that $f$ preserves a probability measure which is equivalent to the Liouville measure~\cite{KrzSzl:69} and hence the set of points whose forward orbit is not dense has zero measure. In particular, for every $x\in X$ the $\{x\}$-exceptional set has zero measure. However, by a result by Urba\'nski~\cite{Urb:91}, this set is large in terms of Hausdorff dimension.  Tseng~\cite{T} 
strengthens this result by showing that in fact this set is a \emph{winning set} in the sense of so-called Schmidt games and hence has full Hausdorff dimension (he also considers the case of a countable set of points $A$).

Abercrombie and Nair \cite{AN2} proved that for a rational map on the Riemann sphere which is uniformly expanding on its Julia set for a given \emph{finite} set of points  $A$ satisfying some additional properties  the $A$-exceptional set has full Hausdorff dimension (see also~\cite{AN1} for a precursor of this work in the case of a Markov map on the interval as well as~\cite{AN3} in a more abstract setting but also requiring uniform expansion of the dynamics and finiteness of the set $A$). Their method of proof (which is similarly used by Dolgopyat~\cite{D} to show Theorem~\ref{Dolgoteoshift} stated below) is based on constructing a certain Borel measure which is supported on the set of points whose forward orbits miss certain neighborhoods of $A$ and then use of a mass distribution principle to estimate dimension. 

Theorems~\ref{teoentropy} and~\ref{teoprinc} and Corollary~\ref{teoprinc2new} generalize these results by Abercrombie and Nair in the sense that we can consider more general sets $A$ and in the sense that we can consider rational maps which are not uniformly expanding. 
They are analogues to \cite[Theorems 1 and 2]{D} by Dolgopyat which allows for more general set $A$ but requires $f$ to be a piecewise uniformly expanding map of the interval.
To the best of our knowledge, our results are the first which apply also in a nonhyperbolic setting. 

Finally, note that there exists a wide range of work on so-called shrinking target problems which are somehow similar -- considering instead of orbits which do not accumulate on a fixed set those orbits which do not hit a neighborhood of a given size which shrinks with the iteration length (see, for example, Hill and Velani~\cite{HV1,HV2,HV3}).

Let us briefly sketch the content of this paper and the idea of the proofs of Theorems \ref{teoentropy} and \ref{teoprinc} (see Section \ref{prova}). 
We will choose a sequence of subsets of $J(f)$ (certain repellers) such that the dynamics inside them is expanding with all their Lyapunov exponents being  close to a given number and their entropy being  close to the entropy of the original system. Such repellers are provided by a construction following ideas of Katok  (see Theorem \ref{Katrin1} in Section~\ref{katoksec}). They have the property that their Lyapunov  exponents and their entropies are close to the ones of an ergodic measure and their Hausdorff dimension is close to the dynamical dimension of the Julia set of whole system. Here we will also invoke the fact~\eqref{eq:dyndimdims}.
Then we will use that (for some iterate of the map) these repellers are conjugate to a subshift of finite type and we will use the following abstract results by Dolgopyat~\cite{D} on shift spaces.

\begin{teorema}[{\cite[Theorem 1]{D}}]\label{Dolgoteoshift}
Let $\sigma\colon  \Sigma^+_M \to \Sigma^+_M$ be a subshift of finite type. 

If $B\subset \Sigma^+_M$ satisfies $h(\sigma,B)< h(\sigma)$, then $h(\sigma,E _{\sigma|\Sigma^+_M}^+(B)) = h(\sigma)$.
\end{teorema}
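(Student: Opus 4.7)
The strategy is to construct, for each $\epsilon>0$, a closed $\sigma$-invariant set $\Lambda\subset\Sigma^+_M$ with $\Lambda\cap B=\emptyset$ and $h(\sigma|_\Lambda)\geq h(\sigma)-\epsilon$. Given such $\Lambda$, any $x\in\Lambda$ satisfies $\overline{\mathcal{O}_\sigma(x)}\subset\Lambda$ and hence $\overline{\mathcal{O}_\sigma(x)}\cap B=\emptyset$, so $\Lambda\subset E^+_{\sigma|\Sigma^+_M}(B)$. Monotonicity of topological entropy then gives $h(\sigma,E^+_{\sigma|\Sigma^+_M}(B))\geq h(\sigma|_\Lambda)\geq h(\sigma)-\epsilon$, and letting $\epsilon\to 0$ proves the theorem.

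The plan is to exploit the hypothesis through Bowen's Carath\'eodory definition of $h(\sigma,B)$. Fix $s\in(h(\sigma,B),h(\sigma))$, set $\epsilon_0:=h(\sigma)-s>0$, and write $\lambda:=e^{h(\sigma)}$ for the Perron eigenvalue of the transition matrix of $\Sigma^+_M$. Since $s>h(\sigma,B)$, the $s$-Carath\'eodory outer measure of $B$ vanishes: for every $\delta>0$ and $N_0\in\N$ there is a cover $\{[\omega_i]\}_{i\in I}$ of $B$ by cylinders with $|\omega_i|\geq N_0$ and $\sum_i e^{-s|\omega_i|}<\delta$. Taking $\delta$ small, $N_0=L$ for a large integer $L$, and refining this cover into a cover of $B$ by length-$L$ cylinders indexed by a set $\mathcal{C}_L$ of length-$L$ admissible words, the elementary bound
\[
\sum_i \lambda^{-|\omega_i|} \;=\; \sum_i e^{-s|\omega_i|}\,e^{-\epsilon_0|\omega_i|} \;\leq\; \delta\,e^{-\epsilon_0 L}
\]
yields $|\mathcal{C}_L|\leq \tilde{C}\,\delta\,e^{sL}$, a factor $e^{-\epsilon_0 L}$ smaller than the total $|W_L|\sim\lambda^L$ of length-$L$ admissible words.

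With this in hand, I would take
\[
\Lambda \;:=\; \bigcap_{k\geq 0}\sigma^{-k}\bigl(\Sigma^+_M\setminus\textstyle\bigcup_{w\in\mathcal{C}_L}[w]\bigr),
\]
a subshift of finite type, closed $\sigma$-invariant and, by construction, disjoint from $B$. Its topological entropy equals $\log\rho_L$, where $\rho_L$ is the Perron eigenvalue of the transition matrix obtained from the de~Bruijn matrix of $\Sigma^+_M$ (spectral radius $\lambda$) by deleting all entries corresponding to occurrences of words in $\mathcal{C}_L$. The whole argument thus reduces to showing $\log\rho_L\geq h(\sigma)-\epsilon$ for $L$ large. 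Since the Parry-measure weight of the removed ``hole'' satisfies $\mu_{\rm Parry}(\bigcup_{w\in\mathcal{C}_L}[w])\leq|\mathcal{C}_L|\cdot C\lambda^{-L}\leq \tilde{C}\,\delta\,e^{-\epsilon_0 L}$, exponentially small in $L$, a Perron--Frobenius perturbation / escape-rate analysis for the transfer operator of the resulting open system should give $\rho_L\geq\lambda(1-o(1))$ as $L\to\infty$, completing the construction.

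The main obstacle is precisely this spectral step. A direct union-bound count of length-$n$ admissible words containing some $w\in\mathcal{C}_L$ as a subword overcounts by a factor of $n$ coming from the $n$ possible positions of $w$, producing a useless estimate $\lesssim n\,\lambda^n\,e^{-\epsilon_0 L}$ that exceeds the total $\sim\lambda^n$ as soon as $n\gg e^{\epsilon_0 L}$; hence a purely combinatorial inclusion-exclusion cannot lower-bound $h(\sigma|_\Lambda)$ uniformly in $n$. The correct route is spectral, quantifying how much the Perron eigenvalue of an irreducible non-negative matrix decreases when a set of rows and columns of prescribed exponentially small Perron-eigenmeasure is deleted, in the spirit of classical escape-rate estimates for open dynamical systems. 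Producing this quantitative spectral-perturbation bound uniformly in the geometric structure of $\mathcal{C}_L$ is where the real work of the proof lies.
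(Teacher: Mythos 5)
Note first that the paper does not prove this statement: it is Theorem~1 of Dolgopyat~\cite{D}, cited and used as a black box. So there is no ``paper's own proof'' to compare against, and your attempt has to be evaluated on its own merits and against what Dolgopyat actually does.

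Your overall strategy is the right one and, at the top level, is what Dolgopyat does too: for each $\epsilon>0$ build a closed $\sigma$-invariant set $\Lambda$ disjoint from $B$ with $h(\sigma|_\Lambda)\ge h(\sigma)-\epsilon$, note $\Lambda\subset E^+_{\sigma}(B)$ since cylinders are clopen, and invoke monotonicity. The passage from $h(\sigma,B)<h(\sigma)$ to a cylinder cover $\{[\omega_i]\}$ of $B$ with $|\omega_i|\ge L$ and $\sum_i e^{-s|\omega_i|}<\delta$ is also correct. However, from there on there are real gaps. First, the intermediate bound $|\mathcal C_L|\le\tilde C\,\delta e^{sL}$ does not follow: the Carath\'eodory cover is in general countably infinite with unbounded cylinder lengths, and replacing each $[\omega_i]$ by its length-$L$ prefix cylinder loses the weight control entirely (many distinct $\omega_i$ can share a prefix, and the number of prefixes is not controlled by $\sum_i e^{-s|\omega_i|}$). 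If one keeps the cover as is, the survivor set $\Lambda$ is a subshift but in general \emph{not} of finite type, so the identification $h(\sigma|_\Lambda)=\log\rho_L$ with $\rho_L$ a Perron eigenvalue of a finite matrix is not available. What is correctly controlled is the Parry measure $\mu(H)\lesssim\delta e^{-\epsilon_0 L}$ of the hole $H=\bigcup_i[\omega_i]$, but that is a weaker piece of information than what your $|\mathcal C_L|$ estimate claims.

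The more serious issue is the one you flag yourself: the quantitative escape-rate/spectral-perturbation bound --- that deleting a hole of exponentially small Parry measure decreases the Perron eigenvalue only by a comparable amount, \emph{uniformly over the combinatorial structure of the hole} --- is exactly the content of the theorem, and you do not prove it. This is not a routine lemma: the Perron eigenvalue of a principal submatrix of an irreducible non-negative matrix can drop substantially even when few rows/columns are removed, depending on where they sit relative to the Perron eigenvectors, and for a countable union of cylinders of unbounded lengths there is no off-the-shelf finite-dimensional perturbation theorem to invoke. Dolgopyat's argument is not a soft spectral-perturbation step in the sense you describe; it is a more hands-on construction tailored to the Carath\'eodory structure of the cover, and reproducing it (or an equivalent escape-rate estimate with the required uniformity) is precisely what is missing. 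As it stands the proposal is an accurate strategic outline, honest about its gap, but not a proof.
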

\noindent
Therefore, Theorem \ref{Dolgoteoshift} guarantees that the entropy on a certain conjugate exceptional set in the subshift coincides with the entropy of the subshift (see Section~\ref{Aset} where general relations for exceptional sets on subsystems are derived). 
To conclude the proof, it is necessary to show a relationship between topological entropy and Hausdorff dimension inside the sub-repellers, which is proved in Section \ref{sec:dimenttt}.

\begin{remark*}{\rm
We remark that the methods in this paper  extend to more general conformal $C^{1+\alpha}$ maps $f$ of a Riemannian manifold $X$ and a compact invariant subset $W\subset X$ studying  exceptional sets in $W$ (relative to the dynamics of $f|_W$). We point out that  one essential ingredient is the  equality%
\footnote{Note that in such a context  we always have $\hD(f|_W)\le \DD(f|_W)\le \dim_{\rm H}W$. Indeed, it suffices to observe that to each  conformal expanding repeller $Y$ there exists an ergodic measure $\mu$ of maximal dimension $\dim_{\rm H}\mu=\dim_{\rm H}Y$ (e.g.~\cite[Theorem 1]{GP}). This implies the first inequality, the second one is immediate.} 
 between hyperbolic dimension and dynamical dimension of $f|_W$ (as in~\eqref{eq:dyndimdims}).
Another one is the possibility to approach any ergodic measure with positive entropy and positive Lyapunov exponent by a certain repeller (see Theorem~\ref{Katrin1}). Then a key point is to guarantee that such repellers are contained in $W$. Whenever these facts were true, our proofs extend to such a map and Theorems~\ref{teoentropy} and~\ref{teoprinc} (and Corollary~\ref{teoprinc2new} in case one has equality between  dynamical and Hausdorff dimension as in~\eqref{eq:equalityAvila})  continue to hold true exchanging the Julia set for $W$.
 
For example, in~\cite{RivShe:} the authors consider  the Julia set of a certain  $C^3$ multimodal interval map with nonflat critical points and without neutral periodic points.  
We refrain from giving all the details and refer to~\cite{RivShe:} for the precise definitions.
Under additional conditions, in particular on the critical points, they establish the corresponding equalities~\eqref{eq:equalityAvila} for such maps. The above results apply in this context (see also~\cite{Cam:15}).	
}
\end{remark*}

\section{Dimension and  entropy of a $(\chi,\epsilon)$-repeller} \label{sec:dimenttt}

In this section we will derive a relationship between the Hausdorff dimension and the topological entropy for a specific type of repellers that we call $(\chi,\epsilon)$-repellers.
First, we recall briefly dimension and entropy and some of their properties.

\subsection{Hausdorff Dimension}\label{HD}

Let $X$ be a metric space. Given a set $Y\subset X$ and a nonnegative number $d \in\R$, we denote the {\it $d$-dimensional Hausdorff measure} of $Y$ by   
$$
\mathcal{H}^d(Y):=
\lim_{r \to 0}\mathcal{H}_{r}^d(Y),
$$
 where 
 $$
 	{\mathcal H}^d_r(Y)
	:= \inf\left\{\displaystyle\sum_{i=1}^{\infty}(\diam U_i)^d\colon  
		Y \subset \bigcup_{i=1}^\infty U_i, \diam U_i <r\right\} ,
$$
where $\diam U_i$ denotes the diameter of $U_i$.
Observe that $\mathcal{H}^d(Y)$ is monotone  nonincreasing in $d$. 
Furthermore, if $d\in(a,b)$ and $\mathcal{H}^d(Y)<\infty$ 
then $\mathcal{H}^b(Y)=0$ 
and $\mathcal{H}^a(Y)=\infty.$
The unique value $d_0$ at which $d\mapsto \mathcal{H}^d(Y)$ 
jumps from $\infty$ to $0$ is the {\it Hausdorff dimension} of $Y$, that is,  
$$
\dim_{\rm H} Y=\inf\{d\geq 0 \colon {\mathcal H}^d(Y)=0\}=
\sup\{d\geq 0 \colon {\mathcal H}^d(Y)=\infty\}.
$$

We recall some of its properties:
\begin{itemize}
\item [(H1)] Hausdorff dimension is monotone: if $Y_1\subset Y_2\subset X$ then $\dim_{\rm H} Y_1\leq\dim_{\rm H} Y_2$. 
\item [(H2)] Hausdorff dimension is countably stable: $\dim_{\rm H}\bigcup_{i=1}^\infty B_i=\sup_i\dim_{\rm H}B_i$. 
\end{itemize}

\subsection{Topological Entropy}\label{Ent}
Let us now define topological entropy. We will follow the more general approach by Bowen \cite{B} considering the topological entropy of a general  (i.e., not necessarily compact and invariant) set.  

Let $X$ be a compact metric space. Consider a continuous map $f\colon X\to X$, a set $Y\subset X$,  and a finite open cover $\mathscr{A} = \{A_1, A_2,\ldots, A_n\}$ of $X$. Given $U\subset X$ we write $U \prec \mathscr{A}$ if there is an index $j$ so that $U\subset A_j$, and $U\nprec\mathscr{A}$ otherwise. 
Taking $U\subset X$ we define 
$$
	n_{f,\mathscr{A}}(U) := 
		\begin{cases}
		0&\text{ if } U \nprec \mathscr{A},\\
		\infty &\text{ if } f^k(U)\prec \mathscr{A}\,\,\forall k\in\mathbb{N},\\
		 \ell&\text{ if }  f^k(U)\prec \mathscr{A}\,\, \forall k\in \{0, \dots,  \ell-1\},f^\ell(U)\nprec\mathcal{A}.
		\end{cases}
$$
If $\mathcal U$ is a countable collection of open sets, given $d>0$ let
\[
	 m(\mathscr A,d,\mathcal U)
	:= \sum_{U\in\mathcal U}e^{-d \,n_{f,\mathscr{A}}(U)}.
\]
Given a set $Y\subset X$, let 
$$
	m_{\mathscr{A}, d} (Y) 
	:= \lim_{\rho \to 0}\inf \Big\{m(\mathscr A,d,\mathcal U)\colon
		Y \subset\disp\bigcup_{U\in\mathcal U} U, e^{-n_{f,\mathcal{A}}(U)}<\rho
		\text{ for every } U\in\mathcal U
	\Big\}.
$$
Analogously to the Hausdorff measure, $d\mapsto m_{\mathcal{A},d}(Y)$ 
jumps from $\infty$ to $0$ at a unique critical point and we define
$$
  h_{\mathscr{A}}(f,Y) 
	:= \inf\{d\colon m_{\mathscr{A}, d}(Y)=0\}
   = \sup\{d\colon m_{\mathscr{A}, d}(Y)=\infty\}. 
$$
The \emph{topological entropy} of $f$ on the set $Y$ is defined by 
$$
	h(f,Y) 
	:= \sup_{\mathscr{A}} h_{\mathscr{A}}(f,Y) ,
$$
Observe that for any finite open cover $\mathscr{A}$ of $Y$ there exists another finite open cover $\mathscr{A}'$ of $Y$ with smaller diameter such that $h_{\mathscr{A}'}(f,Y) \geq h_{\mathscr{A}}(f,Y)$, which means that, in fact, for any $R>0$
$$ 
	h(f,Y)
	= \sup\{h_{\mathscr{A}}(f,Y)\colon 
		\mathscr{A} \texto{ finite open cover of } Y,\diam{\mathscr{A}}< R\}.
$$
When $Y=X$, we simply write $h(f) = h(f,X)$. To avoid confusion, we sometimes explicitly write $h(f|_X,Y)=h(f,Y)$.

In~\cite[Proposition 1]{B}, it is shown that  in the case of a compact set $Y$ this definition is equivalent to the canonical definition of topological entropy (see, for example, \cite[Chapter 7]{W}). 

We recall some of its properties which are relevant in our context (see~\cite{B}). 
\begin{itemize}
\item[(E1)] Conjugation  preserves entropy: If $f\colon X\to X$ and $g\colon Z \to Z$ are topologically conjugate, that is, there is a homeomorphism $\pi\colon  X \to Z$ with $\pi \circ f = g\circ \pi$, then $h(f,Y) = h(g,\pi(Y))$ for every $Y\subset X$. 
\item[(E2)] Entropy is invariant under iteration: $h(f,f(Y)) =  h(f,Y)$. 
\item[(E3)] Entropy is countably stable: $h(f,\bigcup_{i=1}^\infty B_i ) = \sup_i h(f,B_i).$
\item[(E4)]  $h(f^m,Y) = m\cdot h(f,Y)$ for all $m\in\N$.
\item[(E5)] Entropy is monotone: if $Y\subset Z\subset X$ then $h(f,Y)\le h(f,Z)$.
\end{itemize}

\subsection{$(\chi,\epsilon)$-repellers} \label{chirepellers}

In this section let $X$ be a  Riemannian manifold and $f\colon X \to X$ be a differentiable map. We call $f$ {\it conformal} if for each $x\in X$ we have $D_xf = a(x)\cdot\Isom_x$, where $a(x)$ is a positive scalar and $\Isom_x\colon T_xX\to T_{f(x)}X$ is an isometry; in this case we simply write $ a(x) = \lvert f'(x)\rvert$.
 We say that a set $W\subset X$ is \emph{forward invariant} if $f(W)= W$. A compact set $W\subset X$ is said to be \emph{isolated} if there is an open neighborhood $V$ of $W$ such that $f^n(x)\in V$ for all $n\ge0$ implies $x\in W$.
Given a $f$-forward invariant subset $W\subset X$ we call $f|_W$ {\it expanding} if there exists $n\geq 1$ such that, for all $x \in W$ we have   
$$
|(f^n)'(x)|>1. 
$$    

\begin{defi}
A compact $f$-forward invariant isolated expanding set $W\subset X$ of a conformal map $f$ is said to be a \emph{conformal expanding repeller}.

Given numbers $\chi>0$ and $\epsilon\in(0,\chi)$, we call a conformal expanding repeller $W\subset X$ a \emph{$(\chi, \epsilon)$-repeller} if for every $x\in W$ we have
\begin{equation} \label{proprichirepeller}
	\limsup_{n\to \infty} \Big\lvert\frac{1}{n} \log \,\lvert (f^n)'(x)\rvert - \chi\Big\rvert
	< \epsilon.
\end{equation}
\end{defi}

In the following, we will collect some important estimates between Hausdorff dimension and topological entropy of $(\chi, \epsilon)$-repellers. 
The following estimate is of similar spirit as~\cite[Lemma 2]{D}. The method of proof is partially inspired by~\cite{M} and \cite[proof of Theorem 1.2]{BG}. 
See also~\cite{MaWu:10} for similar results.
We will first prove a general result and then consider the particular case of $(\chi, \epsilon)$-repellers.

\begin{prop}\label{proplema 2.0.1} 
Consider  a  Riemannian manifold  $X$ and $f\colon X \to X$ a conformal $C^{1+\alpha}$ map. Let  $W \subset X$ be a conformal expanding repeller.
Let $Z\subset W$ and let $\chi>0$ and $\epsilon\in(0,\chi)$ be numbers such that for every $x\in Z$ we have~\eqref{proprichirepeller}.

Then we have
$$
	\frac{h(f|_W,Z)}{\chi + \epsilon} 
	\leq \dim_{\rm H}Z 
	\leq \frac{h(f|_W,Z)}{\chi - \epsilon} .
$$
\end{prop}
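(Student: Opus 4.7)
My plan is to compare the Carath\'eodory covers defining $h(f|_W,Z)$ with the Hausdorff covers defining $\dim_{\rm H} Z$, using a bounded-distortion bridge to translate the combinatorial weight $e^{-s\,n_{f,\mathscr A}(U)}$ into the metric weight $(\diam U)^t$ and conversely. Since~\eqref{proprichirepeller} is only a pointwise $\limsup$, I first stratify
\[
Z_N := \bigl\{x\in Z\colon e^{n(\chi-\epsilon)}\le\lvert(f^n)'(x)\rvert\le e^{n(\chi+\epsilon)}\text{ for all }n\ge N\bigr\},
\]
so that $Z=\bigcup_{N\ge 1}Z_N$. By the countable stability (H2),~(E3) of $\dim_{\rm H}$ and of $h(f|_W,\cdot)$ it suffices to prove both inequalities for a fixed $Z_N$. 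Fix a Markov partition of $W$ of sufficiently small mesh (which exists since $W$ is a conformal expanding repeller), and let $\mathscr A$ be the associated finite open cover of $W$ with Lebesgue number $\lambda>0$.

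The bounded-distortion lemma for conformal $C^{1+\alpha}$ expanding repellers provides a constant $D\ge 1$ such that, for every sufficiently small set $V$ meeting $W$ with $n:=n_{f,\mathscr A}(V)<\infty$ and every $x\in V\cap W$,
\[
\diam V \le D\,\lvert(f^n)'(x)\rvert^{-1}
\quad\text{and}\quad
\diam f^n(V) \le D\,\lvert(f^n)'(x)\rvert\,\diam V;
\]
since $f^n(V)\not\prec\mathscr A$ forces $\diam f^n(V)\ge\lambda$, the second bound yields a lower estimate on $n$ in terms of $\diam V$. For the upper bound on $\dim_{\rm H} Z_N$, fix $s>h(f|_W,Z_N)$ and, given $\rho<e^{-N}$, choose a cover $\mathcal U$ of $Z_N$ with $e^{-n_{f,\mathscr A}(U)}<\rho$ and $\sum_U e^{-s\,n_{f,\mathscr A}(U)}<\rho$. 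Keeping only those $U$ meeting $Z_N$, for $n_U:=n_{f,\mathscr A}(U)>N$ and $x_U\in U\cap Z_N$ the first distortion bound and the defining Lyapunov inequality of $Z_N$ give $\diam U\le D\,e^{-n_U(\chi-\epsilon)}$, whence $(\diam U)^{s/(\chi-\epsilon)}\le D^{s/(\chi-\epsilon)}\,e^{-s\,n_U}$. Summing and sending $\rho\to 0$ yields $\mathcal H^{s/(\chi-\epsilon)}(Z_N)=0$, so $\dim_{\rm H} Z_N\le s/(\chi-\epsilon)$, and $s\downarrow h(f|_W,Z_N)$ completes this half.

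For the lower bound, fix $d>\dim_{\rm H} Z_N$ and, given $\eta>0$, take a cover $\mathcal V$ of $Z_N$ with $\sum_V(\diam V)^d<\eta$ and each $\diam V$ so small that $n:=n_{f,\mathscr A}(V)>N$. The second distortion bound together with $\diam f^n(V)\ge\lambda$ and the Lyapunov upper bound on $Z_N$ gives $\lambda\le D\,e^{n(\chi+\epsilon)}\diam V$, so $n\ge(\chi+\epsilon)^{-1}\log(\lambda/(D\,\diam V))$. With $s:=d(\chi+\epsilon)$ this yields $e^{-s\,n}\le(D/\lambda)^d(\diam V)^d$, hence $m(\mathscr A,s,\mathcal V)\le(D/\lambda)^d\eta\to 0$, which gives $h_{\mathscr A}(f,Z_N)\le d(\chi+\epsilon)$. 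Taking the supremum over small-diameter $\mathscr A$ produces $h(f|_W,Z_N)\le d(\chi+\epsilon)$, and $d\downarrow\dim_{\rm H} Z_N$ followed by (H2),~(E3) finishes the proof. The main technical hurdle is the distortion bridge for \emph{arbitrary} small sets $V$ (not only Markov cylinders): one must show that such a $V$ lies inside a cylinder whose depth agrees with $n_{f,\mathscr A}(V)$ up to a bounded additive constant, so that the Koebe-type distortion on cylinders transfers to $V$. Once this geometric bridge is in place, the two halves are symmetric applications of the Lyapunov control available on $Z_N$.
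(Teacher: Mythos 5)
Your stratification of $Z$ into the level sets $Z_N$, the use of (H2)/(E3) to reduce to a fixed $Z_N$, and the two one-sided comparisons between Bowen covers and Hausdorff covers are exactly the skeleton of the paper's argument, so the outline is sound. The genuine difference is in the distortion estimate you interpose between $e^{-n_{f,\mathscr A}(U)}$ and $\diam U$. You invoke a Koebe-type bounded-distortion lemma giving a \emph{uniform} constant $D$ (independent of the depth $n$), which is indeed available for $C^{1+\alpha}$ conformal expanding repellers via the chain-rule, H\"older continuity of $\log\lvert f'\rvert$, and exponential backward contraction along the pseudo-cylinder $\bigcap_{k<n}f^{-k}(A_{i_k})$ determined by $\mathscr A$; the ``arbitrary set $V$'' issue you flag is resolved precisely by the fact that every $f^k(V)$, $0\le k<n$, sits in a single member of $\mathscr A$, so the orbit of any pair of points in $V\cap W$ stays at distance $\le\diam\mathscr A$ for $n$ steps and the backward-contraction sum converges. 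The paper, by contrast, makes do with only the uniform continuity of $\log|f'|$ (inequality \eqref{lemdesigeqlog1}): it accepts a distortion error $e^{n\theta}$ that grows with $n$, absorbs it into a $\theta$-perturbed exponent $\chi\pm\epsilon\pm\theta$, and then sends $\theta\to 0$ at the end. Your route buys a cleaner calculation once the distortion lemma is established and makes essential use of the $C^{1+\alpha}$ hypothesis; the paper's route is more elementary (it would work already for conformal $C^1$ maps with $f'\neq 0$ near $W$) and avoids altogether the ``main technical hurdle'' you correctly identify, at the cost of carrying an extra auxiliary parameter $\theta$ through the estimates. Both arguments also need $V$ (respectively $U$) to be geometrically tame enough that one can pass between $\diam f^k(V)$ and $\lvert f'\rvert\diam f^{k-1}(V)$; the paper handles this by reducing to geodesics inside elements of the cover (Claim~\ref{cla:fuenf} and the geodesic-path argument in Step~1), and you should make the analogous reduction to covers by (geodesically convex) balls explicit when you fill in your distortion bridge.
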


\begin{proof}
In what follows, in order to simplify notations we avoid conceptually unnecessary use of coordinate charts. 

Given $N\in\mathbb N$, we define the following level sets: 
$$
	Z_N 
	:= \Big\{ x \in Z\colon  
		\Big\lvert \frac{1}{n}\log\,\lvert(f^n)'(x)\rvert - \chi\Big\rvert<\epsilon 
		\text{ for all } n\geq N\Big\}.
$$
By hypothesis on $Z$, we have that 
\begin{equation}\label{Zunion}
Z=\disp\bigcup_{N\in\N} Z_N.
\end{equation}
Observe that $Z_N\subset Z_{N'}$ for $N<N'$. 
Given $N\in\N$, for all  $x\in Z_N$ and all $k\geq N$ we have 
\begin{equation}\label{lema2cotader}
	e^{k(\chi - \epsilon)} 
	< |(f^{k})'(x)| 
	< e^{k(\chi + \epsilon)} .
\end{equation}

On a sufficiently small  neighborhood $V$ of $W$  we have $|f'|\ne0$ and hence for $\theta >0$ there exists $R =R(\theta)> 0$ such that if $z_1, z_2 \in V$ and $d(z_1, z_2) < R$ then  
\begin{equation}\label{lemdesigeqlog1}
	\big\lvert\log\,\lvert f'(z_1)\rvert - \log\,\lvert f'(z_2)\rvert\big\rvert < \theta. 
 \end{equation}

\smallskip\noindent\textbf{Step 1:} 
We start by showing 
\begin{equation}\label{eq:dimonesid}
	h(f|_W,Z)\le (\chi+\epsilon)\dim_{\rm H}Z.
\end{equation}

Fix $N\in\N$. 
Fix some $\theta>0$ and let $R=R(\theta)$ as above. 

We start by estimating the entropy  on $Z_N$. For that we choose some finite open cover $\mathscr{A}$ of $W$ with  $\diam\mathscr{A} \leq R$. Let $\ell=\ell(\mathscr A)$ denote a Lebesgue number of $\mathscr A$. Let 
\[
	r_0=r_0(N)
	:=\ell\min_{0\leq k \leq N} \min_{x\in \ovl{V}}\,\lvert(f^k)'(x)\rvert^{-1}. 
\]

We prove the following intermediate result.

\begin{claim}\label{claimmmm}
	For every $\gamma>(\chi+\epsilon+\theta)\dim_{\rm H}Z_N$, we have  $m_{\mathscr A, \gamma} (Z_N) =0$.
\end{claim}	

\begin{proof}
Let $D:=\gamma/(\chi +\epsilon+\theta)$.
Let $c=\log(\ell/2)/(\chi+\epsilon+\theta)$.
Let $\zeta>0$. 
As $D>\dim_{\rm H} Z_N$, there is $\rho_0>0$ such that for all $r\in(0,\rho_0)$ we have that 
\[
	\inf\Big\{\sum_ir_i^D\colon Z_N\subset\bigcup_iB(x_i,r_i),r_i<r\Big\}<
	\zeta e^{c\gamma}.
\]
Let $\rho_1:=\min\{r_0, \rho_0\}$. 
Then, for every $\rho \in (0,\rho_1)$ there is $r\in (0, \rho)$  also satisfying
$$
	r
	< (e^c \rho)^{\chi +\epsilon+\theta}
$$
and a cover $\mathcal U=\{U_i\}$ of $Z_N$ by open balls $U_i=B(x_i,r_i)$, $r_i<r$, so that
\begin{equation}\label{eq:22222}
	\sum_ir_i^D<
	\zeta e^{c\gamma}.
\end{equation}

For every $U_i\in\mathcal{U}$, for any  $z_1,z_2 \in U$ for all $j \in\{0,\ldots,n_{f,\mathscr{A}}(U_i) - 1\}$ we have 
\[
	d(f^j(z_1), f^j(z_2)) <\diam\mathscr A\le R.
\]	
From~\eqref{lemdesigeqlog1} it follows that  for every $k=1,\ldots,n_{f,\mathscr{A}}(U_i)$ we have
\[
	\big|\log|(f^k)'(z_1)| - \log|(f^k)'(z_2)|\big| 
	\leq \sum_{j=0}^{k-1}\big |\log|f'(f^j (z_1))| - \log|f'(f^j(z_2))|\big| 
	\leq k \theta
\]
and hence 
\begin{equation}\label{lemadeslogeq2}
e^{-k\theta} \leq \disp\frac{|(f^k)'(z_1)|}{|(f^k)'(z_2)|} \leq e^{k\theta} .
\end{equation}

Given $i$, for $x\in U_i\in\mathcal U$ let $F(x)=f^{n_{f,\mathscr{A}}(U_i)}(x)$. By definition of $n_{f,\mathscr{A}}(U_i)$ and of the Lebesgue number  $\ell$, for every $U_i\in\mathcal U$ it follows that $\ell \leq \diam f^{n_{f,\mathscr{A}}(U_i)}(U_i)=\diam F(U_i)$. Consider $x,y\in \overline{U_i}$ such that $\diam F(U_i) = d(F(x), F(y))$. 
Consider the shortest path $\gamma\colon [0,1] \to X$ linking $x$ to $y$, which is completely contained in $\overline{U_i}$ since $\mathcal U$ is a cover by balls.
Thus  
$$
\ell  \leq  d(F(x) , F(y)) 
\leq \int^1_0 |(F\circ \gamma)'(t)|\,dt 
 =  \int^1_0 |F'(\gamma(t))||\gamma ' (t)|\,dt. 
$$
Observe that $r_i < r_0$ implies that $n_{f, \mathcal{A}}(U_i) >N$. 
Considering $z\in U_i\cap Z_N$, with 
$k = n_{f,\mathscr{A}}(U_i)>N$ in~\eqref{lemadeslogeq2} and~\eqref{lema2cotader} we conclude
\[\begin{split}
	\ell & \leq \int^1_0 \disp\frac{|F'(\gamma(t))|}{|F'(z)|} |F'(z)|\,|\gamma ' (t)|\,dt\\
        \text{by~\eqref{lemadeslogeq2}}\quad
         &\leq  e^{ n_{f,\mathscr{A}}(U_i)\theta}\int^1_0 
        		|(f^{n_{f,\mathscr{A}}(U_i)})'(z)|\,|\gamma ' (t)|\,dt\\ 
       \text{by~\eqref{lema2cotader}}\quad
       & < e^{n_{f,\mathscr{A}}(U_i)\theta}
       		e^{n_{f,\mathscr{A}}(U_i)(\chi + \epsilon)} \diam U_i.
\end{split}\]
Recalling the definition of $c$ we obtain  
\begin{equation}\label{lemainteqp1}
	e^{- n_{f,\mathscr{A}}(U_i)} 
	< \big(\ell^{-1}\diam U_i\big)^{1/(\chi+\epsilon+\theta)}
	= e^{-c} (\frac12\diam U_i)^{1/(\chi + \epsilon+\theta)}.
\end{equation}
Since $\diam{U_i} < 2r < 2(e^c \rho)^{\chi +\epsilon+\theta}$ we have  
$	e^{-n_{f, \mathcal{A}}(U_i)}  
	< \rho. 
$

Then,  we have
\[\begin{split}
	m(\mathscr A,\gamma,\mathcal U)
	&= \sum_{U_i\in\mathcal U}e^{-\gamma\, n_{f,\mathscr A}(U_i)}\\
	\text{by~\eqref{lemainteqp1} }\quad
	&\le  e^{-c\gamma}\sum_{U_i\in\mathcal U} 
		(\frac12\diam U_i)^{\gamma/(\chi + \epsilon+\theta)}
	= e^{-c\gamma}\sum_{U_i\in\mathcal U} 
		r_i^D
	\\
	\text{by~\eqref{eq:22222}}\quad 	
	&< e^{-c\gamma} \zeta e^{c\gamma} = \zeta.
\end{split}\]

Summarizing, for arbitrary $\zeta>0$, there exists $\rho_1>0$ such that for any $\rho\in( 0,\rho_1)$ we can cover $Z_N$ by a family of balls $U_i$ satisfying $e^{-n_{f, \mathcal{A}}(U_i)} < \rho$ and $\sum_{U_i\in \mathcal{U}} e^{-\gamma n_{f,\mathcal{A}}(U_i)} < \zeta$.   
Thus $m_{\mathscr{A}, \gamma}(Z_N) =0$ as claimed.
\end{proof}

By Claim~\ref{claimmmm}, for every $\gamma>(\chi+\epsilon+\theta)\dim_{\rm H}Z_N$, we have $m_{\mathscr A,\gamma}(Z_N)=0$, which implies  $h_{\mathscr{A}}(f,Z_N)\leq\gamma$. 
Since $\gamma>(\chi+\epsilon+\theta)\dim_{\rm H}Z_N$ is arbitrary,  therefore
$$
	h_{\mathscr A}(f,Z_N)\leq (\chi+\epsilon+\theta)\dim_{\rm H} Z_N.
$$
Thus, as $\mathscr A$ was arbitrary (but sufficiently small)
\[
	h(f|_W,Z_N)\le (\chi+\epsilon+\theta)\dim_{\rm H} Z_N.
\]	 
Since $\theta$ was arbitrary, we obtain
\[
	h(f|_W,Z_N)\le (\chi+\epsilon)\dim_{\rm H}Z_N.
\]

Now recall that $N\ge1$ was arbitrary.	
With~\eqref{Zunion} and countable stabilities (H2) of  Hausdorff dimension and (E3) of entropy we conclude~\eqref{eq:dimonesid} from 
$$
	\dim_{\rm H} Z 
	= \sup_N \dim_{\rm H} Z_N 
	\geq \sup_N \disp\frac{h(f|_W, Z_N)}{\chi+\epsilon} 
	= \frac{1}{\chi+\epsilon}\sup_Nh(f|_W,Z_N)
	= \frac{1}{\chi+\epsilon}h(f|_W, Z).
$$
This concludes Step 1.

\smallskip\noindent
\textbf{Step 2:}  
We now show
\begin{equation}\label{eq:upperboudim}
		\dim_{\rm H}Z \le \frac{h(f|_W,Z)}{\chi-\epsilon}.
\end{equation}

Fix some $N\in\mathbb N$. 
Fix some $\theta\in(0,\chi-\epsilon)$ and let $R=R(\theta)$ as above.

We start by estimating the dimension of $Z_N$.
Fix some $\tau>0$ and 
denote $D:=(h(f|_W,Z_N)+\tau)/(\chi-\epsilon-\theta)$. 
Observe that 
\[
	(\chi - \epsilon -\theta)D 
	= h(f|_W,Z_N)+\tau >  h(f|_W,Z_N) 
	= \sup_{\mathscr{A}} h_{\mathscr{A}} (Z_N).
\]	 
Hence, for any finite open cover $\mathscr{A}$ of $W$ we have $m_{\mathscr A,(\chi - \epsilon -\theta)D}(Z_N) = 0$. 
Choose some cover $\mathscr{A}$ with $\diam\mathscr A\le R$.  

Given some $U\prec \mathscr{A}$ with $n=n_{f,\mathscr A}(U)<\infty$, fix some point $x\in U\cap Z_N$ and consider the sequence $x_k=f^k(x)$, $k=0,\ldots,n-1$.
So for each $k$ there is some $A_k\in\mathscr A$ with $x_k\in f^k(U)\subset A_k$.
Denote by $f^{-k}_{x_{n-1-k}}$ the inverse branch $g$ of $f^k$ so that $(g\circ f^k)(x_{n-1-k})=x_{n-1-k}$. We observe the following preliminary fact.

\begin{claim}\label{cla:fuenf}
	For every $k=0,\ldots,n-1$  for every $x\in U$ we have
	\[
		\diam f^{-k}_{x_{n-1-k}} (f^{n-1}(U))
		\le \lvert (f^k)'(x_{n-1-k})\rvert^{-1} e^{k\theta}\cdot R.
	\]	
\end{claim}

\begin{proof}
The proof is by induction. For $k=0$ we have $f^{n-1}(U)\subset A_{n-1}\in\mathscr A$ and hence $\diam f^{n-1}(U)\le R$. For $k\in \{1,\ldots,n-1\}$, suppose the claim holds for $k-1=j$. 
Let $V_{j+1}:=f^{-(j+1)}_{x_{n-1-(j+1)}}(f^{n-1}(U))= f^{-1}_{x_{n-1-(j+1)}}(V_j)$ and observe that, in particular, $V_{j+1}\subset A_{j+1}\in\mathscr A$. Since for every $y,z\in A_{j+1}$, using~\eqref{lemdesigeqlog1},  we have that $\lvert f'(y)\rvert/\lvert f'(z)\rvert\le e^\theta$, we can conclude 
\[
	\diam V_{j+1}
	\le \sup_{y\in V_{j+1}}\lvert f'(y)\rvert^{-1}\diam V_j
	\le \lvert f'(x_j)\rvert^{-1}e^\theta\diam V_j.
\]
Invoking the induction hypothesis for $k =j$, we obtain
\[\begin{split}
	\diam V_{j+1}
	&\le \lvert f'(x_j)\rvert^{-1}e^\theta 
		\cdot\lvert (f^j)'(x_{n-1-j})\rvert^{-1}e^{j\theta} \cdot R\\
	&= \lvert (f^{j+1})'(x_{n-1-(j+1)})\rvert^{-1}e^{(j+1)\theta}\cdot R,	
\end{split}\]
proving the assertion for $j+1$. This proves the claim.
\end{proof}

\begin{claim}\label{cla:clavier}
	$\mathcal{H}^D(Z_N) = 0$. 
\end{claim}

\begin{proof}
Let $\eta>0$. 
Observe that $m_{\mathscr A,(\chi - \epsilon -\theta)D}(Z_N)=0$ implies that there is $\rho_0>0$ such that for every $\rho \in(0,\rho_0)$ we have that
\[
	 \inf\Big\{ m(\mathscr A,(\chi-\epsilon-\theta)D,\mathcal U)
		\colon Z_N\subset\bigcup_{U\in\mathcal U}U, e^{-n_{f,\mathscr A}(U)} < \rho\Big\}
	< \eta e^{-(\chi - \epsilon -\theta)D} R^{-D}.
\]
Consider $r_1 < \min\{\rho_0, e^{-(N+1)}\}$. 
Then, for every $r \in (0,r_1)$ there is $\rho \in (0,r)$ also satisfying
\begin{equation}\label{eq:definitionr11}
	e^{\chi-\epsilon-\theta}R\cdot \rho^{\chi-\epsilon-\theta}
	< r.
\end{equation}
Hence,  there exists a cover $\mathcal{U}=\{U_i\}$ of $Z_N$ satisfying $e^{-n_{f, \mathcal{A}}(U_i)} < \rho$ and
\begin{equation}\label{eq:bedingungentr2}
	 m(\mathscr A, (\chi - \epsilon -\theta)D,\mathcal U) 
	 < \eta  e^{-(\chi - \epsilon -\theta)D} R^{-D}. 
\end{equation}

Note that $\rho< e^{-(N+1)}$ implies $n_{f,\mathscr A}(U_i)>N+1$. Also recall that  $f^k(U_i)$ lies inside an element of $\mathscr A$ for every $k=0,\ldots,n_{f,\mathscr A}(U_i)-1$. 
Consequently, with Claim~\ref{cla:fuenf} for $k=n_{f,\mathscr A}(U_i)-1$ and $x\in  Z_N \cap U_i$ we obtain
\[
	\diam U_i
	\le \lvert (f^{n_{f,\mathscr A}(U_i)-1})'(x)\rvert^{-1} 
		e^{(n_{f,\mathscr A}(U_i)-1)\theta}\cdot R
	< e^{-(n_{f,\mathscr A}(U_i)-1)(\chi-\epsilon-\theta)}\cdot R.	
\]
Thus, since $e^{-n_{f,\mathscr A}(U_i)}< \rho$, we have that 
\[\begin{split}
	\diam U_i
	&< e^{\chi-\epsilon-\theta}
		R\cdot e^{-n_{f,\mathscr A}(U_i)(\chi-\epsilon-\theta)}
	<e^{\chi-\epsilon-\theta}
		R\cdot\rho^{\chi-\epsilon-\theta}\\
	\text{by~\eqref{eq:definitionr11}}\quad
	&<r. 
\end{split}\]
By~\eqref{eq:bedingungentr2} and above inequality, 
\[\begin{split}
	\sum_{U_i\in\mathcal U}(\diam U_i)^D
	&\le \sum_i\left(e^{\chi-\epsilon-\theta}R\cdot e^{-n_{f,\mathscr A}(U_i)(\chi-\epsilon-\theta)}\right)^{D}\\
	&= e^{(\chi-\epsilon-\theta)D}R^D
		\cdot m(\mathscr A,D(\chi-\epsilon-\theta),\mathcal U)	
	< \eta.
\end{split}\]

Summarizing, for arbitrary $\eta>0$, there exists $r_1>0$ such that for every $r\in(0,r_1)$ we can cover $Z_N$ by $\mathcal{U}$ such that $\diam U_i< r$ for every $U_i\in\mathcal U$ and $\sum_{U_i\in \mathcal{U}} (\diam U_i)^D < \eta$.   
Thus, $\mathcal H^D(Z_N)=0$, proving the claim.
\end{proof}

Claim~\ref{cla:clavier} now implies immediately
\[
	\dim_{\rm H}Z_N\le\frac{h(f|_W,Z_N)+\tau}{\chi-\epsilon-\theta}.
\]	
As $\tau>0$ and $\theta\in(0,\chi-\epsilon)$ were arbitrary, we conclude  
$$
\dim_{\rm H} Z_N \leq \disp\frac{h(f|_W, Z_N)}{\chi - \epsilon}.
$$
Finally, recall that $N$ was arbitrary, by~\eqref{Zunion}, (E3), and (H2), we obtain
$$
	\dim_{\rm H} Z 
	= \sup_N \dim_{\rm H} Z_N 
	\leq \sup_N \frac{h(f|_W, Z_N)}{\chi - \epsilon} 
	= \frac{h(f|_W,Z)}{\chi - \epsilon}.
$$
This shows~\eqref{eq:upperboudim} and finishes the proof of  the proposition.
\end{proof}

The following is now an immediate consequence of  Proposition~\ref{proplema 2.0.1}.

\begin{cor}\label{proplema 2.0cor} 
Consider  a  Riemannian manifold $X$ and $f\colon X \to X$ a conformal $C^{1+\alpha}$ map. Let  $W \subset X$ be a  $(\chi,\epsilon)$-repeller.

Then for every $Z\subset W$ we have
$$
	\frac{h(f|_W,Z)}{\chi + \epsilon} 
	\leq \dim_{\rm H}Z 
	\leq \frac{h(f|_W,Z)}{\chi - \epsilon} .
$$
\end{cor}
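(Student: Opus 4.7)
The plan is to observe that this corollary is a direct specialization of Proposition~\ref{proplema 2.0.1}: the hypotheses of that proposition are already satisfied by every subset $Z$ of a $(\chi,\epsilon)$-repeller, so there is nothing new to prove.

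More precisely, by the very definition of a $(\chi,\epsilon)$-repeller, the set $W$ is a conformal expanding repeller and the asymptotic derivative condition~\eqref{proprichirepeller} holds for \emph{every} $x\in W$. Consequently, for any $Z\subset W$, condition~\eqref{proprichirepeller} holds for every $x\in Z$ as well. This is exactly the hypothesis imposed on $Z$ in Proposition~\ref{proplema 2.0.1}. Thus the proposition applies verbatim and yields the desired double inequality
\[
\frac{h(f|_W,Z)}{\chi + \epsilon}
\leq \dim_{\rm H}Z
\leq \frac{h(f|_W,Z)}{\chi - \epsilon}.
\]

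There is no real obstacle here: the entire substance of the argument was carried out in Proposition~\ref{proplema 2.0.1} (Steps~1 and~2 of that proof, exhausting $Z$ by the level sets $Z_N$ and using bounded distortion on sufficiently small scales together with countable stability of both Hausdorff dimension and topological entropy). The corollary merely repackages the statement under the stronger uniform hypothesis that~\eqref{proprichirepeller} holds throughout the ambient repeller $W$, so the proof reduces to a single sentence invoking Proposition~\ref{proplema 2.0.1}.
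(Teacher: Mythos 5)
Your proposal is correct and matches the paper exactly: the paper states that the corollary is "an immediate consequence of Proposition~\ref{proplema 2.0.1}" and gives no further proof, relying precisely on the observation that a $(\chi,\epsilon)$-repeller is a conformal expanding repeller on which~\eqref{proprichirepeller} holds at every point.
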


Finally, we provide some further consequences which we will need in the sequel. Given $N\in \N$ let $R\subset W$ be a compact set satisfying
\begin{equation}\label{eq:W}
	f^N(R)=R \quad\text{ and }\quad W=\bigcup_{i=0}^{N-1}f^i(R).
\end{equation}

\begin{lema}\label{lem:simple}
	$h(f|_W)=\frac1Nh(f^N|_R)$.
\end{lema}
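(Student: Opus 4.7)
The plan is to reduce both sides of the claimed equality to the entropy of the same set $R$ (viewed inside $W$), and then let property (E4) do the rest. The argument is essentially formal manipulation with the entropy properties (E2), (E3), (E4) together with Bowen's compatibility theorem between his Carathéodory-type entropy and the canonical topological entropy.

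First, using the decomposition $W = \bigcup_{i=0}^{N-1} f^i(R)$ together with countable stability (E3) of entropy, I would write
\[
    h(f|_W)
    = h(f|_W, W)
    = \sup_{0 \le i \le N-1} h(f|_W, f^i(R)).
\]
Iterating property (E2), namely $h(f, f(Y)) = h(f, Y)$, one obtains inductively that $h(f|_W, f^i(R)) = h(f|_W, R)$ for every $i \ge 0$, so the supremum collapses and $h(f|_W) = h(f|_W, R)$. Applying property (E4) to $Y = R$ with $m = N$ then yields $h(f^N|_W, R) = N \cdot h(f|_W, R)$, and therefore
\[
    h(f|_W) = \frac{1}{N}\, h(f^N|_W, R).
\]

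It remains to identify $h(f^N|_W, R)$ with $h(f^N|_R)$. Since $f^N(R) = R$ and $R$ is compact, Bowen's \cite[Proposition 1]{B} (recalled in Section~\ref{Ent}) tells us that his entropy of the compact $f^N$-invariant set $R$, computed in the ambient space $W$, coincides with the canonical topological entropy of the self-map $f^N\colon R \to R$; the latter is intrinsic to the dynamical system $f^N|_R$ and is what the notation $h(f^N|_R)$ denotes. Combining the identities gives $h(f|_W) = \tfrac{1}{N} h(f^N|_R)$. The only mildly subtle step is this last compatibility between Bowen's ambient-space definition and the intrinsic entropy on the compact invariant set $R$; everything else is bookkeeping with (E2), (E3), and (E4).
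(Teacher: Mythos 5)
Your proof is correct and follows essentially the same route as the paper: decompose $W$ via (E3), collapse the supremum with (E2), apply (E4), and identify the ambient-space entropy of $R$ with the intrinsic one. The paper compresses this into a single chain of equalities and attributes the last step simply to the $f^N$-invariance of $R$; your explicit appeal to Bowen's \cite[Proposition 1]{B} for that step is a slightly more careful way of saying the same thing.
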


\begin{proof}
	By (E3), (E2), (E4) and the $f^N$-invariance of $R$ we have 
\[	
	h(f|_W)
	=\max_ih(f|_W,f^i(R))
	=h(f|_W,R)
	=\frac1Nh(f^N|_W,R)
	=\frac1Nh(f^N|_R).
\]	
This proves the lemma.\end{proof}

\begin{lema}\label{afirm2.1.1}
Consider  a  Riemannian manifold $X$ and $f\colon X \to X$ a conformal $C^{1+\alpha}$ map. 
Suppose that $W \subset X$ is a  $(\chi,\epsilon)$-repeller of positive entropy
and $R\subset W$  a compact set satisfying $f^N(R)=R$ and $W=\bigcup_{i=0}^{N-1}f^i(R)$  for some $N\ge1$. 

Then for every $Y\subset R$ we have   
\[
	\dim_{\rm H} Y 
	\geq 
      \frac{h(f|_{W},Y)}{h(f|_{W})} 
		\frac{(\chi - \epsilon)}{(\chi + \epsilon)}
		\dim_{\rm H} W	
    = \frac{h(f^N|_{R},Y)}{h(f^N|_{R})} 
		\frac{(\chi - \epsilon)}{(\chi + \epsilon)}
		\dim_{\rm H} W.
\]
\end{lema}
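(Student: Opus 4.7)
The plan is to read this lemma off as a direct consequence of Corollary~\ref{proplema 2.0cor}, applied twice, combined with Lemma~\ref{lem:simple} and property (E4) to handle the equality between the two right-hand expressions. I anticipate no real obstacle here: Proposition~\ref{proplema 2.0.1} has already done all the serious dimension--entropy work on $(\chi,\epsilon)$-repellers, so what remains is essentially bookkeeping.

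First, apply Corollary~\ref{proplema 2.0cor} with $Z=Y$ to obtain the lower bound
\[
	\dim_{\rm H} Y \geq \frac{h(f|_W,Y)}{\chi+\epsilon}.
\]
Then apply the same corollary with $Z=W$; since $W$ is $f|_W$-invariant, $h(f|_W,W)=h(f|_W)$, and the upper bound becomes $\dim_{\rm H}W\leq h(f|_W)/(\chi-\epsilon)$, or equivalently
\[
	h(f|_W)\geq (\chi-\epsilon)\dim_{\rm H}W.
\]
The hypothesis of positive entropy guarantees $h(f|_W)>0$, so I may multiply and divide the first bound by $h(f|_W)$ and then substitute the second to obtain
\[
	\dim_{\rm H} Y
	\;\geq\; \frac{h(f|_W,Y)}{h(f|_W)}\cdot\frac{h(f|_W)}{\chi+\epsilon}
	\;\geq\; \frac{h(f|_W,Y)}{h(f|_W)}\cdot\frac{\chi-\epsilon}{\chi+\epsilon}\cdot\dim_{\rm H}W,
\]
which is the first form of the claimed inequality.

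For the equality between the two right-hand expressions, it suffices to verify $h(f|_W,Y)/h(f|_W)=h(f^N|_R,Y)/h(f^N|_R)$. Lemma~\ref{lem:simple} gives the denominator relation $h(f^N|_R)=N\cdot h(f|_W)$, and (E4) applied to $f|_W$ gives $h(f^N|_W,Y)=N\cdot h(f|_W,Y)$. The remaining identification $h(f^N|_W,Y)=h(f^N|_R,Y)$ is immediate from Bowen's definition: since $Y\subset R$ and $f^N(R)=R$, forward iterates under $f^N$ of subsets of $R$ stay in $R$, so any finite open cover of $W$ restricts to an open cover of $R$ with the same cover-return times $n_{f^N,\mathscr A}$ on sets meeting $Y$. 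Cancelling the common factor $N$ yields the asserted equality. The only mild care is in this last comparison of ambient entropies, and that is standard; everything else is algebra.
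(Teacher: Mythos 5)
Your proof is correct and follows essentially the same route as the paper: two applications of Corollary~\ref{proplema 2.0cor} (once to $Y$, once to $W$), multiplied together, with the equality of the two right-hand ratios obtained from Lemma~\ref{lem:simple}, property (E4), and the $f^N$-invariance of $R$. No discrepancies.
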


\begin{proof}
Applying  Corollary~\ref{proplema 2.0cor} we have
\[
	\frac{1}{h(f|_W)}(\chi-\epsilon)\dim_{\rm H} W \le 1.
\]
Given $Y\subset R\subset W$, we also have
\[
	 \frac{h(f|_W,Y)}{\chi+\epsilon}\le \dim_{\rm H} Y 
\]
Multiplying both  inequalities, we obtain the first inequality. 
The equality is a consequence of  Lemma \ref{lem:simple}, the Property (E4) and the $f^N$- invariance of $R$. 
\end{proof}

\section{Expanding repellers for nonuniformly expanding maps}\label{katoksec}
In order to find an approximation of ergodic quantifiers of the -- in general non-expanding -- maps,  we  follow an idea by Katok to construct suitable repellers. For a proof of the  following theorem see~\cite[Chapter 11.6]{PU} and ~\cite[Theorems 1 and 3]{G}.    

\begin{teorema}\label{Katrin1}
Consider  a  Riemannian manifold $X$ and $f\colon X \to X$ a conformal $C^{1+\alpha}$ map. Let $\mu$ be an   $f$-invariant ergodic measure with positive entropy $h_\mu(f)$ and positive Lyapunov exponent 
\[
	\chi(\mu) := \int\log\,\lvert f'\rvert\,d\mu.
\]	 
 
Then for all $\epsilon>0$ there is a compact set $W_{\epsilon} \subset X$ such that $f|_{W_{\epsilon}}$ is a conformal expanding repeller satisfying:
\begin{itemize}
\item[(a)] 
	$h_{\mu} (f) +\epsilon\geq h(f|_{W_{\epsilon}}) \geq h_{\mu} (f) -\epsilon$,\\[-0.4cm]
\item[(b)] For every $f$-invariant ergodic measure $\nu$ supported in $W_\epsilon$ we have
\[
	\big\lvert \chi(\nu)-\chi(\mu)\big\rvert <\epsilon.
\] 
\end{itemize}
In particular, $W_\epsilon$ is a $(\chi(\mu),\epsilon)$-repeller.

Moreover, there is a compact set $R_\epsilon\subset W_\epsilon$ and some $N=N(\epsilon)\in\N$ such that $f^N(R_\epsilon) = R_\epsilon$, $f^N|_{R_{\epsilon}}$ is expanding and topologically conjugate to a topologically mixing subshift of finite type, and we have
$$
W_\epsilon = \disp\bigcup_{i=0}^{N-1} f^i(R_\epsilon). 
$$ 
\end{teorema}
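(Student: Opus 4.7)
The plan is to implement a Katok-type horseshoe construction following \cite[Chapter~11.6]{PU}, substantially simplified by the fact that $f$ is conformal and $\chi(\mu)>0$: the measure $\mu$ is ``uniformly expanding'' in every direction, so Pesin unstable manifolds become ordinary neighborhoods and their role is played by Bowen balls of a fixed small radius.

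First I would combine three classical ingredients. Birkhoff's ergodic theorem applied to $\log\lvert f'\rvert$ gives $\tfrac1n\log\lvert(f^n)'(x)\rvert\to\chi(\mu)$ for $\mu$-a.e.\ $x$; the Brin--Katok local entropy formula gives $-\tfrac1n\log\mu(B_n(x,r))\to h_\mu(f)$; and Egorov's theorem makes both convergences uniform on a compact set $\Lambda$ of positive $\mu$-measure. Fixing a sufficiently small scale $r_0>0$ and large integer $n$, Katok's standard volume-counting argument produces an $(n,r_0)$-separated subset $E_n\subset\Lambda$ with $\#E_n\ge e^{n(h_\mu(f)-\epsilon/2)}$. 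Covering $\Lambda$ by finitely many balls of diameter much smaller than $r_0$ and applying pigeonhole twice (to the initial and $n$-th iterate of each selected orbit), one extracts $E'_n\subset E_n$ with $M:=\#E'_n\in[e^{n(h_\mu(f)-\epsilon)},e^{n(h_\mu(f)+\epsilon)}]$ whose points all start and return, under $f^n$, to a common small ball $D$.

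Second, $C^{1+\alpha}$-regularity and conformality furnish bounded distortion along orbits of length $n$ whose intermediate images stay $r_0$-close to $\Lambda$: telescoping the Hölder estimate $\lvert\log\lvert f'(y)\rvert-\log\lvert f'(z)\rvert\rvert\le Cd(y,z)^\alpha$ shows that $\lvert(f^n)'(y)\rvert/\lvert(f^n)'(z)\rvert$ stays uniformly bounded. Combined with the uniform Lyapunov estimate on $\Lambda$, this implies that each of the $M$ inverse branches of $f^n$ determined by a point of $E'_n$ extends to a conformal contraction $g$ on $D$ with
\[
e^{-n(\chi(\mu)+\epsilon/2)}\le\lvert g'\rvert\le e^{-n(\chi(\mu)-\epsilon/2)}.
\]
After shrinking $D$ slightly, these $M$ contractions have pairwise disjoint images in $D$, hence form a conformal iterated function system whose attractor $R_\epsilon$ is conjugate to the one-sided full shift on $M$ symbols (in particular topologically mixing). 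Setting $N:=n$ and $W_\epsilon:=\bigcup_{i=0}^{N-1}f^i(R_\epsilon)$ yields the required expanding repeller.

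Conclusions (a) and (b) then follow quickly: since $h(f^N|_{R_\epsilon})=\log M\in[N(h_\mu(f)-\epsilon),N(h_\mu(f)+\epsilon)]$, Lemma~\ref{lem:simple} gives (a); and any invariant measure $\nu$ on $W_\epsilon$ is supported on orbits traversing only the constructed inverse branches, whose per-step logarithmic contraction is within $\epsilon/2$ of $\chi(\mu)$, so integrating $\log\lvert f'\rvert$ against $\nu$ yields $\lvert\chi(\nu)-\chi(\mu)\rvert<\epsilon$, which in turn makes \eqref{proprichirepeller} hold uniformly on $W_\epsilon$. The main technical obstacle is the bounded distortion step: it requires that all $n$ intermediate images of $D$ remain within $r_0$ of $\Lambda$ so that the Hölder sum telescopes geometrically in the diameters of those images. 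Matching the scales $r_0$, the cover of $\Lambda$, the integer $n$ and the final shrinking of $D$ to a single $\epsilon$ is where most of the bookkeeping lies; in the conformal setting this is unobstructed by stable directions, which is ultimately why the general Pesin machinery of \cite[Theorems 1 and 3]{G} reduces to an essentially one-dimensional distortion argument.
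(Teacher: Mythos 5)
The paper does not prove Theorem~\ref{Katrin1}; it cites~\cite[Chapter 11.6]{PU} and~\cite[Theorems 1 and 3]{G} for the proof, so there is no internal argument to compare against. Your sketch correctly reconstructs the standard Katok horseshoe construction that those references carry out: Birkhoff plus Brin--Katok plus Egorov to get a good Pesin block $\Lambda$, a volume-counting argument producing an $(n,r_0)$-separated set of cardinality roughly $e^{nh_\mu(f)}$, a double pigeonhole to make the selected orbit segments start and end in a common small ball $D$, bounded distortion along those segments to produce $M$ uniformly contracting inverse branches of $f^n$ on $D$, and conjugacy of the resulting attractor to the full one-sided shift on $M$ symbols. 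The derivation of (a) via $\log M$ and Lemma~\ref{lem:simple}, of (b) by integrating the uniform per-step derivative bounds, and of the $(\chi(\mu),\epsilon)$-repeller property from those same uniform bounds, are all correct.

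One appraisal in the last paragraph deserves pushback. You locate the main difficulty in the telescoping Hölder estimate and credit conformality (absence of stable directions) with collapsing the Pesin machinery of~\cite{G}. Conformality does simplify the geometry, but the reference~\cite{G} is titled precisely to address \emph{singular or critical points}, and that is the genuinely delicate point in this setting: for a rational map whose Julia set contains critical points, or for a multimodal interval map, $|f'|$ vanishes somewhere, inverse branches of $f^n$ do not exist near critical values, and the bounded distortion estimate $\lvert\log\lvert f'(y)\rvert-\log\lvert f'(z)\rvert\rvert\le Cd(y,z)^\alpha$ fails (indeed $\log|f'|$ is unbounded). The positivity and finiteness of $\chi(\mu)$ only says $\mu$ gives no mass to the critical set; one still must arrange the Pesin block $\Lambda$, the scale $r_0$, and all intermediate images of $D$ to stay a definite distance from the critical set, using temperedness of $n\mapsto\log|f'(f^n x)|$ on $\Lambda$. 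Your sketch does not state this, and a reader following it literally would be stuck at the first step of the induction in the distortion argument near a critical value. This does not make the plan wrong --- it is exactly the plan of~\cite[Chapter 11.6]{PU} and~\cite{G} --- but the phrase ``essentially one-dimensional distortion argument, unobstructed by stable directions'' misidentifies where the bookkeeping effort actually goes.

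A smaller point: after the pigeonhole you assert $M=\#E'_n$ lies in $[e^{n(h_\mu(f)-\epsilon)},e^{n(h_\mu(f)+\epsilon)}]$. The upper bound does not come from topological-entropy counting on $J$ (which only gives $e^{n\log d}$); it is obtained simply by discarding points from $E'_n$ until exactly the desired cardinality remains, and then the lower bound must be checked to survive the two pigeonhole steps, which it does because the number of covering balls is a constant independent of $n$. This is worth saying explicitly since otherwise the quoted interval looks like a counting fact rather than a choice.
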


These repellers $W_\epsilon$ have good dimension properties as we shall see below.  In particular, we can apply 
Corollary~\ref{proplema 2.0cor} to them. 

For the following result recall the definition of the dynamical dimension in~\eqref{def:DD}.

\begin{lema}\label{lemaqeaprox} 
	Let $f\colon \ovl{\C} \to \ovl{\C}$ be a rational function of degree $\ge2$ on the Riemann sphere and let $J=J(f)$ be its Julia set. 

Then there exist a sequence of probability measures $(\mu_n)_n$ and a sequence of positive numbers $(\epsilon_n)_n$ with $\lim_{n\to 0}\epsilon_n = 0$ and $\epsilon_n<\chi(\mu_n)/n$ such that there are $(\chi(\mu_n), \epsilon_n)$-repellers  $W_n = W_n(\mu_n)\subset J$ satisfying  
\[
\lim_{n\to \infty}  \dim_{\rm H} W_n = \DD(f|_{J}).
\]
\end{lema}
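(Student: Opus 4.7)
The plan is to combine three ingredients: the equality $\DD(f|_J)=\hD(f|_J)$ from~\eqref{eq:dyndimdims}, the Katok-type approximation in Theorem~\ref{Katrin1}, and the dimension-entropy comparison of Corollary~\ref{proplema 2.0cor}. First I would invoke~\eqref{eq:dyndimdims} to select conformal expanding repellers $Y_n\subset J$ with $\dim_{\rm H}Y_n\to\DD(f|_J)$. On each $Y_n$, the standard thermodynamic formalism on a conformal $C^{1+\alpha}$ expanding repeller (Bowen's formula together with the equilibrium state of $-s\log|f'|$ at $s=\dim_{\rm H}Y_n$) provides an $f$-invariant ergodic measure $\mu_n$ supported in $Y_n$ with positive entropy, positive Lyapunov exponent $\chi(\mu_n)=\int\log|f'|\,d\mu_n$, and satisfying the dimension identity
\[
	h_{\mu_n}(f)=(\dim_{\rm H}Y_n)\,\chi(\mu_n),
	\qquad\text{equivalently,}\qquad
	\dim_{\rm H}\mu_n=\dim_{\rm H}Y_n.
\]

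Next, I would pick $\epsilon_n\in(0,\chi(\mu_n)/n)$; since $\chi(\mu_n)$ is uniformly bounded above (for instance by $\sup_J\log|f'|$), this forces $\epsilon_n\to 0$. Theorem~\ref{Katrin1} applied to $(\mu_n,\epsilon_n)$ then produces a $(\chi(\mu_n),\epsilon_n)$-repeller $W_n$ with $h(f|_{W_n})\ge h_{\mu_n}(f)-\epsilon_n$; the containment $W_n\subset J$ is automatic because the Katok repeller is built inside an arbitrarily small neighborhood of $\mathrm{supp}(\mu_n)\subset J$ and $J$ is closed and $f$-invariant. Applying Corollary~\ref{proplema 2.0cor} with $Z=W=W_n$ yields
\[
	\dim_{\rm H}W_n
	\ge\frac{h(f|_{W_n})}{\chi(\mu_n)+\epsilon_n}
	\ge\frac{h_{\mu_n}(f)-\epsilon_n}{\chi(\mu_n)+\epsilon_n}
	=\frac{\dim_{\rm H}Y_n-\epsilon_n/\chi(\mu_n)}{1+\epsilon_n/\chi(\mu_n)}.
\]
Since $\epsilon_n/\chi(\mu_n)<1/n\to 0$ and $\dim_{\rm H}Y_n$ is bounded, the right-hand side tends to $\lim_n\dim_{\rm H}Y_n=\DD(f|_J)$. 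For the matching upper bound, $W_n$ is itself a conformal expanding repeller inside $J$, so~\eqref{eq:dyndimdims} gives $\dim_{\rm H}W_n\le\hD(f|_J)=\DD(f|_J)$, and sandwiching concludes that $\lim_n\dim_{\rm H}W_n=\DD(f|_J)$.

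The step I expect to require the most attention is producing, inside each repeller $Y_n$, an ergodic measure $\mu_n$ whose Hausdorff dimension equals $\dim_{\rm H}Y_n$ and whose Lyapunov exponent is positive: this is what converts the hyperbolic-dimension characterization~\eqref{eq:dyndimdims} into the ergodic-theoretic input demanded by Theorem~\ref{Katrin1}. On a conformal $C^{1+\alpha}$ expanding repeller this is classical, relying on Bowen's formula together with the identity $\dim_{\rm H}\mu=h_\mu(f)/\chi(\mu)$ valid for ergodic measures with positive entropy and positive Lyapunov exponent; once it is available, the rest of the argument is essentially bookkeeping with the bounds from Corollary~\ref{proplema 2.0cor}.
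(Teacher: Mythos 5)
Your proposal is correct, but it routes through the material in a genuinely different way from the paper in two places. First, where the paper simply picks $\mu_n$ directly from the definition~\eqref{def:DD} of $\DD(f|_J)$ as a supremum over ergodic measures of positive entropy (a one-line step), you instead go through the equality $\hD(f|_J)=\DD(f|_J)$ to choose near-optimal conformal expanding repellers $Y_n$ and then extract measures of maximal dimension on them via Bowen's equation and equilibrium states (as in~\cite{GP}); this costs an extra invocation of thermodynamic formalism on $Y_n$ but buys nothing that the direct definition of $\DD$ does not already give. Second, and more to the point, for the lower bound on $\dim_{\rm H}W_n$ the paper works on the mixing sub-repeller $R_n$ coming from Theorem~\ref{Katrin1}, applies Bowen's formula there, and then uses the variational principle to select a measure $\nu$ with near-maximal entropy, arriving at $s_n\ge (h_{\mu_n}(f)-2\epsilon_n)/(\chi(\mu_n)+\epsilon_n)$. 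You instead apply Corollary~\ref{proplema 2.0cor} directly with $Z=W=W_n$, which is cleaner (that corollary already packages the variational bookkeeping) and yields the slightly sharper $\dim_{\rm H}W_n\ge (h_{\mu_n}(f)-\epsilon_n)/(\chi(\mu_n)+\epsilon_n)$. Both approaches then finish the same way, using $\epsilon_n<\chi(\mu_n)/n$ and Ma\~n\'e's formula~\eqref{eq:Mane} for the lower limit and $\hD(f|_J)=\DD(f|_J)$ for the upper bound. One small point to keep in mind in your version: the measure of maximal dimension on $Y_n$ has positive entropy only once $\dim_{\rm H}Y_n>0$, so strictly you should restrict to $n$ large enough that $\dim_{\rm H}Y_n>0$, which is harmless since $\DD(f|_J)>0$ for any rational map of degree $\ge 2$.
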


\begin{proof}
First note that for a $f$-invariant ergodic probability measure $\mu$ of a rational function with positive Lyapunov exponent $\chi(\mu)$ we have 
\begin{equation}\label{eq:Mane}
	\dim_{\rm H}\mu = \frac{h_{\mu}(f)}{\chi(\mu)}
\end{equation}
(\cite{Man:88}, see also~\cite[Chapters 8--10]{PU}).

Given $n\in \N$, by definition of the dynamical dimension, there is an $f$-ergodic probability measure $\mu_n$  with positive entropy (and hence positive Lyapunov exponent) such that 
\begin{equation}\label{lemadihqe}
	\dim_{\rm H}\mu_n \geq \DD(f|_{J}) - \frac{1}{n} .
\end{equation}
Choose $\epsilon_n>0$ satisfying $\epsilon_n<\chi(\mu_n)/n$.
Let $W_n$ be a $(\chi(\mu_n),
\epsilon_n)$-repeller provided by Theorem \ref{Katrin1} applied to $\mu_n$ and recall that there is $N=N(\epsilon_n)$ and $R_n\subset W_n$ such that $f^N|_{R_n}$ is expanding and conjugate to a mixing subshift of finite type. Observe that $\dim_{\rm H}W_n=\dim_{\rm H}R_n$. 
Also observe that $W_n\subset J$.
Applying Bowen's formula (see~\cite{GP}) for $f^N|_{R_n}$, with $s_n=\dim_{\rm H}R_n$ we have
\[
	0=\sup_\nu\big(h_\nu(f^N)- s_nN\chi(\nu)\big),
\]
where the supremum is taken over all $f^N$-invariant  measures $\nu$ supported in $R_n$.
Recall that for every invariant measure $\nu$ for $f^N\colon R_n\to R_n$ we get an invariant measure $\hat\nu$ for $f\colon W_n\to W_n$ by defining $\hat\nu:=\frac1N(\nu+f_\ast\nu+\ldots+f^{N-1}_\ast\nu)$ and observe that $h_\nu(f^N)=Nh_{\hat\nu}(f)$. Further, $h(f^N|_{R_n})=Nh(f|_{W_n})$ (Lemma~\ref{lem:simple}).
By the variational principle for topological entropy (see e.g. \cite[Chapter 9]{PU}), we can take $\nu$ such that 
$h_\nu(f^N) \ge Nh(f|_{W_n}) -N\epsilon_n$, which implies 
$$
0\ge h(f|_{W_n}) - \epsilon_n -s_n\chi(\nu). 
$$
From Theorem~\ref{Katrin1} 
we obtain
\[
	0 
	\ge  h_{\mu_n}(f) -2\epsilon_n - s_n (\chi(\mu_n)+\epsilon_n),
\] 
which implies
\[
	s_n\ge \frac{h_{\mu_n}(f) -2\epsilon_n}{\chi(\mu_n)+\epsilon_n}.
\]
Hence, by~\eqref{eq:Mane}, we conclude 
\begin{equation} \label{dimmu}
	s_n 
	\ge \dim_{\rm H}\mu_n\left(\frac{\chi(\mu_n)}{\chi(\mu_n) + \epsilon_n}\right) 
		- \frac{2\epsilon_n}{\chi(\mu_n) + \epsilon_n}. 
\end{equation}
As we required $0<\epsilon_n < \chi(\mu_n)/n$, inequalities (\ref{lemadihqe}) and (\ref{dimmu}) show that 
\[
\left(\DD(f|_{J}) - \frac{1}{n}\right) \frac{1}{1+1/n} - \frac{2}{n+1}
\leq s_n=\dim_{\rm H} W_n. 
\]
Finally, it follows definition of hyperbolic dimension and ~\eqref{eq:dyndimdims} that
\[
	\dim_{\rm H} W_n
	\le \hD(f|_{J})
	= \DD(f|_{J}).
\]
Taking the limit when $n\to\infty$, we obtain 
$$
\disp\lim_{n\to \infty} \dim_{\rm H} W_n = \DD(f|_{J}). 
$$
This proves the lemma.
\end{proof}

\section{General properties of exceptional sets}\label{Aset}

In this section we will derive some general properties of exceptional sets. We first show that being exceptional is preserved by conjugation.

\begin{lema}\label{lempropriE1}
If $f\colon X \to X$ and $g\colon Y \to Y$ are topologically conjugate by a homeomorphism $\pi \colon X \to Y$ with $g\circ \pi = \pi \circ f$, then for every $A \subset Y$ we have  
$$
\pi (E^+_{f|X}(\pi^{-1}(A))) = E^+_{g|Y}(A). 
$$
\end{lema}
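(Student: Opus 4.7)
The plan is to show the two-sided equality by verifying that $x \in E^+_{f|X}(\pi^{-1}(A))$ if and only if $\pi(x) \in E^+_{g|Y}(A)$, which, since $\pi$ is a bijection, will immediately give the claim.

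The starting point is the standard observation that a conjugacy intertwines orbits: from $g\circ \pi = \pi\circ f$ one obtains $g^k\circ\pi = \pi\circ f^k$ for every $k\in\N$ by induction, so
\[
	\mathcal{O}_g(\pi(x)) = \pi(\mathcal{O}_f(x)).
\]
Next I would use that $\pi$ is a homeomorphism, hence takes closures to closures: for any subset $S\subset X$ one has $\pi(\overline{S}) = \overline{\pi(S)}$ (continuity gives one inclusion, continuity of $\pi^{-1}$ the other). Applying this to $S=\mathcal{O}_f(x)$ yields
\[
	\overline{\mathcal{O}_g(\pi(x))} = \pi\bigl(\overline{\mathcal{O}_f(x)}\bigr).
\]

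With this identity in hand, the core step is the chain of equivalences, using again that $\pi$ is a bijection between $X$ and $Y$ (so intersecting a subset of $Y$ with $\pi(T)$ is equivalent to intersecting $\pi^{-1}$ of that subset with $T$):
\[
	\overline{\mathcal{O}_g(\pi(x))}\cap A = \emptyset
	\;\Longleftrightarrow\;
	\pi\bigl(\overline{\mathcal{O}_f(x)}\bigr)\cap A = \emptyset
	\;\Longleftrightarrow\;
	\overline{\mathcal{O}_f(x)}\cap \pi^{-1}(A) = \emptyset.
\]
The left-hand condition says $\pi(x)\in E^+_{g|Y}(A)$ and the right-hand condition says $x\in E^+_{f|X}(\pi^{-1}(A))$. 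Since $\pi$ is a bijection between $X$ and $Y$, this equivalence of memberships is exactly the set equality $\pi(E^+_{f|X}(\pi^{-1}(A))) = E^+_{g|Y}(A)$.

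There is no real obstacle here; the only point to be careful about is that $\pi$, being a homeomorphism between compact Hausdorff (metric) spaces, genuinely commutes with taking closures and with complementation/preimages, which is what makes the equivalences above go through in both directions. The lemma is therefore a short, purely topological verification that sets the stage for later transferring exceptional-set statements between the original dynamics and conjugate symbolic models.
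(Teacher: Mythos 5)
Your argument is correct, and it proves slightly more than the paper states in a single stroke: by establishing $\overline{\mathcal{O}_g(\pi(x))}=\pi\bigl(\overline{\mathcal{O}_f(x)}\bigr)$ and then chaining bijection-preserved set identities, you obtain both inclusions at once. The paper instead argues by contradiction through a sequential characterization of accumulation points (take $g^{n_k}(y)\to y_0\in A$, push through $\pi^{-1}$ to contradict exceptionality, then remark that the reverse inclusion is analogous). The two routes rest on exactly the same fact -- conjugacy intertwines orbits and homeomorphisms commute with closure -- so the difference is purely stylistic; your version is a bit cleaner since it dispenses with subsequences and the ``analogously'' step, while the paper's phrasing is closer to how later lemmas in Section~\ref{Aset} (e.g.\ Lemmas~\ref{lem:invsubsetneeew} and~\ref{lempropriE1.2}) are proved, where genuine compactness and subsequence extraction are needed and the sequential style is unavoidable.
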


\begin{proof}
Given $y\in \pi(E^+_{f|X}(\pi^{-1}(A)))$, 
suppose that $y\notin E^+_{g|Y}(A)$. Then there is a subsequence $(n_k)_k$ and $y_0\in A$ such that $g^{n_k}(y)$ converge to $y_0$. By conjugation, $f^{n_k}(\pi^{-1}(y))$ converges to $\pi^{-1}(y_0)\in\pi^{-1}(A)$, which is a contradiction.
Thus, $\pi (E^+_{f|X}(\pi^{-1}(A))) \subset E^+_{g|Y}(A)$. 

The other inclusion is analogous, by conjugation.
\end{proof}

We require the following simple fact which we state without proof. 

\begin{lema}\label{lem:invsubset}
Let $W\subset X$ be a compact set  such that $f(W)=W$. 

If $A\subset X$ then
$\displaystyle
	E^+_{f|W}(A\cap W) \subset E^+_{f|X}(A)
$.
\end{lema}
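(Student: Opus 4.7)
The plan is to unwind the definitions. Suppose $x \in E^+_{f|W}(A\cap W)$. By definition, this means $x \in W$ and $\overline{\mathcal{O}_f(x)} \cap (A\cap W) = \emptyset$. I want to conclude that $\overline{\mathcal{O}_f(x)} \cap A = \emptyset$, which will give $x \in E^+_{f|X}(A)$.

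The key observation is that the forward orbit of any point in $W$ stays inside $W$. Since $f(W) = W$, in particular $f(W) \subset W$, so an easy induction shows $f^k(x) \in W$ for every $k \geq 0$, hence $\mathcal{O}_f(x) \subset W$. Since $W$ is compact and therefore closed in $X$, we further obtain $\overline{\mathcal{O}_f(x)} \subset W$.

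With this containment in hand, the conclusion is immediate: since every point of $\overline{\mathcal{O}_f(x)}$ already lies in $W$, intersecting with $A$ is the same as intersecting with $A \cap W$, that is,
\[
\overline{\mathcal{O}_f(x)} \cap A
= \overline{\mathcal{O}_f(x)} \cap W \cap A
= \overline{\mathcal{O}_f(x)} \cap (A\cap W)
= \emptyset.
\]
Thus $x \in E^+_{f|X}(A)$, establishing the inclusion $E^+_{f|W}(A\cap W) \subset E^+_{f|X}(A)$.

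There is no real obstacle here; the statement is a direct consequence of the forward invariance and closedness of $W$, and the proof is essentially a single line once the definitions are unwound. The only point to be slightly careful about is that forward invariance in the form $f(W) = W$ (and hence $f(W) \subset W$) together with the closedness of $W$ are exactly what is needed to transfer closure of orbits inside $W$.
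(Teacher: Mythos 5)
Your proof is correct, and it is precisely the straightforward argument the authors have in mind: they state this lemma without proof, calling it a ``simple fact.'' You correctly identify the two hypotheses that make it work (forward invariance $f(W)\subset W$ to keep the orbit in $W$, and compactness hence closedness of $W$ to keep the closure of the orbit in $W$), and the rest is the set-theoretic identity $\overline{\mathcal{O}_f(x)}\cap A=\overline{\mathcal{O}_f(x)}\cap(A\cap W)$.
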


In order to see how exceptional sets behave with respect to iterates, for given $A\subset X$ and $N \in\N$ let us  denote  

\begin{equation}\label{defAm}
A_N := \bigcup_{j=0}^{N-1}f^{-j}(A).
\end{equation}

\begin{lema} \label{lem:invsubsetneeew}
Let  $W\subset X$ be  a compact set  such that $f(W)=W$. 

If $A\subset W$ then $E^+_{f|W}(A) = E^+_{f^N|W}(A_N\cap W)$.
\end{lema}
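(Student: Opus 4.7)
The plan is to prove both inclusions directly by translating between accumulation of $\mathcal{O}_f(x)$ on $A$ and accumulation of $\mathcal{O}_{f^N}(x)$ on $A_N\cap W$. Throughout, note that $f(W)=W$ ensures $\mathcal{O}_f(x)\subset W$ for every $x\in W$, so compactness of $W$ is available to extract convergent subsequences, and $A\subset W$ implies $A\subset A_N\cap W$.

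\textbf{Inclusion $\supset$.} Start with $x\in W$ such that $\overline{\mathcal{O}_{f^N}(x)}\cap (A_N\cap W)\ne\emptyset$, and pick $y$ in this intersection together with a sequence $n_k$ with $f^{Nn_k}(x)\to y$. By definition of $A_N$, there is some $j\in\{0,\ldots,N-1\}$ with $y\in f^{-j}(A)$, so $f^j(y)\in A$. Applying continuity of $f^j$ gives $f^{Nn_k+j}(x)\to f^j(y)\in A$, and therefore $x\notin E^+_{f|W}(A)$.

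\textbf{Inclusion $\subset$.} Start with $x\in W$ such that $\overline{\mathcal{O}_f(x)}\cap A\ne\emptyset$, and pick $y\in A$ and a sequence $n_k$ with $f^{n_k}(x)\to y$. Write each index via Euclidean division as $n_k = Nq_k + r_k$ with $r_k\in\{0,\ldots,N-1\}$. By the pigeonhole principle, after passing to a subsequence we may assume $r_k=r$ is constant. By compactness of $W$, after a further subsequence we may assume $f^{Nq_k}(x)\to z$ for some $z\in W$. Continuity of $f^r$ then forces $f^r(z)=\lim f^{Nq_k+r}(x)=y\in A$, hence $z\in f^{-r}(A)\cap W\subset A_N\cap W$. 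This shows $z\in\overline{\mathcal{O}_{f^N}(x)}\cap(A_N\cap W)$, so $x\notin E^+_{f^N|W}(A_N\cap W)$.

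The only delicate point is handling the possibility that the sequence $(n_k)$, respectively $(Nn_k)$, is eventually constant (so that the limit point already lies in the forward orbit rather than being a genuine accumulation point); this is absorbed by the same Euclidean division argument, using that $A_N\cap W$ is itself a set in which the orbit may simply land. I expect the main (minor) obstacle to be bookkeeping the two successive subsequence extractions in the $\subset$ direction and verifying that all iterates stay in the compact set $W$ where the extraction takes place.
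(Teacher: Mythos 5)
Your proof is correct and follows essentially the same argument as the paper: one direction uses continuity of $f^j$ to push the accumulation point of the $f^N$-orbit forward into $A$, and the other uses Euclidean division $n_k = Nq_k + r_k$ together with pigeonhole, compactness of $W$, and continuity of $f^r$ to produce an accumulation point of the $f^N$-orbit lying in $f^{-r}(A)\cap W \subset A_N\cap W$. Note only that your two inclusion labels are swapped: the argument you label $\supset$ in fact establishes $E^+_{f|W}(A)\subset E^+_{f^N|W}(A_N\cap W)$, and vice versa.
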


\begin{proof}
Let $x \in E^+_{f|W}(A)$.  Suppose that there is $y\in \ovl{\mathcal{O}_{f^N}(x)} \cap A_N\cap W$. 
Then, there are $j_0 \in \{0,\ldots, N-1\}$ such that $y\in f^{-j_0}(A)$  and a sequence $(n_k)_{k=0}^\infty$ such that $\lim_{k\to \infty}f^{Nn_k}(x) = y$.  By continuity of $f$, we have that $\lim_{k\to \infty} f^{Nn_k+j_0}(x) = f^{j_0}(y) \in A$ 
and hence $\ovl{\mathcal{O}_{f}(x)} \cap A \neq \emptyset $, which is a contradiction. This proves that $E^+_{f|W}(A) \subset E^+_{f^N|W}(A_N\cap W)$.

Consider now $x\in E^+_{f^N|W}(A_N\cap W)$. Suppose that there exists  
$y\in \ovl{O_f(x)} \cap A$. Thus, there is a subsequence $(n_k)_{k=0}^\infty$  such that $\lim_{k\to\infty} f^{n_k}(x) = y \in A$.  We can write $n_k = N s_k + r_k$ where $0\leq r_k\leq N-1$. 
Then exist $r \in \{0, \ldots, N-1\}$ such that 
$(f^{Ns_k +r}(x))_{k=0}^\infty$ is a  subsequence such that $\lim_{k\to\infty} f^{Ns_k + r}(x) = y\in A$.  By compactness of  $W$ and because $f^{Ns_k}(x) \in W$ for all $k$, there exists a  convergent subsequence $\lim_{k\to\infty}f^{N\ell_k}(x) = v \in W$. 
By continuity of $f$ we have that 
$$
	f^r(v) = f^r\big(\lim_{k\to\infty}f^{N\ell_k}(x)\big) = \lim_{k\to\infty}f^{N\ell_k+r}(x) = y.
$$    
Thus, $\lim_{k\to\infty}f^{N\ell_k}(x) = v \in f^{-r}(y)\subset 
f^{-r}(A)$, wich is a contradiction. 
This proves the other inclusion. 
\end{proof}

For the remaining  results in this section, let $W\subset X$ be a compact set such that $f(W)=W$, let $N\in \N$ and let $R\subset W$ be a compact set satisfying
\[
	f^N(R)=R
	\quad\text{ and }\quad 
	W=\bigcup_{i=0}^{N-1}f^i(R),
\]
and let $A\subset W$ and let $A_N$ be defined as in~\eqref{defAm}.

\begin{lema}\label{lempropriE1.2} 
For all $i\in\{0, \ldots, N-1\}$, we have
\[
	f^i\big(E^+_{f^N|R}(A_N\cap R)\big) \subset 
	E^+_{f^N|f^i(R)}\big(A_N\cap f^i(R)\big).
\]	
\end{lema}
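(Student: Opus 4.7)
The plan is to argue by contradiction. Suppose $x \in E^+_{f^N|R}(A_N\cap R)$ but $f^i(x) \notin E^+_{f^N|f^i(R)}(A_N\cap f^i(R))$; then there exist $y \in A_N\cap f^i(R)$ and a sequence $n_k\to\infty$ with $f^{Nn_k+i}(x) = f^i(f^{Nn_k}(x)) \to y$.

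First, I would use compactness: since $f^N(R)=R$ and $x\in R$, the points $f^{Nn_k}(x)$ all lie in $R$, so after passing to a subsequence we may assume $f^{Nn_k}(x)\to z$ for some $z\in R$, and by continuity of $f^i$ we get $f^i(z)=y$. Next, since $y\in A_N$, fix $j\in\{0,\ldots,N-1\}$ with $f^j(y)\in A$; equivalently, $f^{i+j}(z)\in A$.

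The key step, where a little care is needed, is the index bookkeeping. Write $i+j=qN+r$ with $q\in\{0,1\}$ and $0\le r<N$ (possible since $0\le i+j\le 2N-2$). Then $f^r(f^{qN}(z))\in A$ gives $f^{qN}(z)\in f^{-r}(A)\subset A_N$, and since $f^{qN}(R)=R$ we also have $f^{qN}(z)\in R$; hence $f^{qN}(z)\in A_N\cap R$.

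To close the argument I would verify that $f^{qN}(z)$ is an accumulation point of the $f^N$-orbit of $x$: if $q=0$ this is the limit $z=\lim_k f^{Nn_k}(x)$ itself, while if $q=1$ it is $f^N(z)=\lim_k f^{N(n_k+1)}(x)$ by continuity of $f^N$. Either way, this contradicts the hypothesis that $\overline{\mathcal{O}_{f^N}(x)}\cap(A_N\cap R)=\emptyset$. The main (minor) obstacle is precisely the arithmetic step above, since the shift by $i+j$ may straddle two $f^N$-periods; the rest of the argument is just compactness of $R$ and continuity of iterates of $f$.
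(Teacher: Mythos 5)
Your argument is correct and matches the paper's proof essentially step by step: pull back the accumulating sequence via compactness and $f^N$-invariance of $R$, track the index $j$ with $f^j(y)\in A$, and reduce $i+j$ modulo $N$ to land back in $A_N\cap R$, shifting one $f^N$-step when $i+j\ge N$. Your observation that the quotient $q$ can only be $0$ or $1$ is a small clarifying touch the paper leaves implicit, but the route is the same.
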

\begin{proof}
Let $y\in f^i(E^+_{f^N|R}(A_N\cap R))$. 
Then there is $x\in E^+_{f^N|R}(A_N\cap R)$ such that $f^i(x)=y$. 
Suppose, by contradiction, that there is $z\in \ovl{\mathcal{O}_{f^N|f^i(R)}(y)}\cap A_N$. 
Then, there are $j\in\{0, \ldots, N-1\}$ and a sequence $(n_k)_{k=1}^\infty$ such that $\lim_{k\to \infty} f^{Nn_k}(y) = z \in f^{-j}(A)$. 
By compactness and $f^N$-invariance of $R$, we have that there are $\tilde{x}\in R$ and a subsequence $(n_\ell)_{\ell=0}^{\infty}$ of $(n_k)_{k=0}^{\infty}$ such that 
$\lim_{\ell\to \infty}f^{Nn_\ell}(x) = \tilde{x}$. 
Note that, by continuity of $f^i$, follows that 
$$
z = \lim_{\ell\to\infty}f^{Nn_\ell}(y) = f^i(\lim_{\ell\to\infty}f^{Nn_\ell}(x)) = f^i(\tilde{x}). 
$$
In this case, $\tilde{x} \in f^{-i}(z)$ and, since $z\in f^{-j}(A)$, we have that $\tilde{x}\in f^{-(i+j)}(A)$. 
If $i+j \in\{0, \ldots, N-1\}$, then $\tilde{x}\in A_N\cap R$ which is a contradiction. 
If $i+j\geq N$, then there are $s, \iota\in \N$ such that 
$\iota \in \{0, \ldots, N-1\}$ and $i+j = sN+\iota$. 
Thus, by continuity of $f^{sN}$ and $f^N$-invariance of $R$, follows that 
$$
\lim_{\ell\to\infty}f^{N(s+n_\ell)}(x) = f^{Ns}(\tilde{x}) \in f^{-\iota}(A)\cap R.  
$$ 
It is again a contradiction. 
\end{proof}

\begin{lema} \label{lempropriE2}
$\displaystyle
	E^+_{f^N|W}(A_N\cap W) 
	= \disp\bigcup_{i=0}^{N-1}E^+_{f^N|{f^i(R)}}(A_N\cap f^i(R)).
$
\end{lema}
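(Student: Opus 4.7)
The plan is to prove both inclusions of the claimed equality by exploiting the single key fact that each piece $f^i(R)$ is itself $f^N$-invariant: indeed, since $f^N(R)=R$, we have $f^N(f^i(R)) = f^i(f^N(R)) = f^i(R)$. Because $R$ is compact and $f$ is continuous, each $f^i(R)$ is a compact $f^N$-invariant subset of $W$, so any $f^N$-orbit starting inside $f^i(R)$ stays there and its closure still lies inside $f^i(R)$.

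For the inclusion $\supset$, I would take $x \in E^+_{f^N|f^i(R)}(A_N\cap f^i(R))$ for some $i\in\{0,\ldots,N-1\}$. Then $x \in f^i(R) \subset W$, and by the invariance remark above, $\overline{\mathcal{O}_{f^N}(x)} \subset f^i(R)$. If some $y \in \overline{\mathcal{O}_{f^N}(x)} \cap (A_N \cap W)$ existed, then automatically $y \in f^i(R)$ too, so $y \in A_N \cap f^i(R)$, contradicting the hypothesis that $x$ lies in $E^+_{f^N|f^i(R)}(A_N\cap f^i(R))$. Hence $x \in E^+_{f^N|W}(A_N\cap W)$.

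For the inclusion $\subset$, I would take $x \in E^+_{f^N|W}(A_N\cap W)$. Since $W=\bigcup_{i=0}^{N-1} f^i(R)$, there is some $i_0$ with $x \in f^{i_0}(R)$. Again, $\overline{\mathcal{O}_{f^N}(x)} \subset f^{i_0}(R)$ by $f^N$-invariance. Any point $y \in \overline{\mathcal{O}_{f^N}(x)} \cap (A_N \cap f^{i_0}(R))$ would in particular lie in $A_N \cap W$, contradicting $x \in E^+_{f^N|W}(A_N\cap W)$. Therefore $\overline{\mathcal{O}_{f^N}(x)}\cap (A_N \cap f^{i_0}(R))=\emptyset$, and so $x \in E^+_{f^N|f^{i_0}(R)}(A_N \cap f^{i_0}(R))$, which is contained in the union on the right-hand side.

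There is essentially no obstacle: the only subtlety is to remember that membership in an exceptional set concerns the \emph{closure} of the forward orbit rather than the orbit itself, and therefore we must justify that $\overline{\mathcal{O}_{f^N}(x)} \subset f^i(R)$; this is immediate from the compactness of $f^i(R)$ combined with its $f^N$-invariance, which in turn follows directly from $f^N(R)=R$.
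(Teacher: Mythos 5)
Your proof is correct and follows essentially the same route as the paper: both arguments rest on the observation that $f^N(f^i(R)) = f^i(f^N(R)) = f^i(R)$, so each piece $f^i(R)$ is compact and $f^N$-invariant, which forces $\overline{\mathcal{O}_{f^N}(x)} \subset f^i(R)$ and makes intersecting with $A_N\cap W$ equivalent to intersecting with $A_N\cap f^i(R)$. The only cosmetic difference is that you argue by contradiction for each inclusion while the paper writes the same content as a chain of set equalities.
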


\begin{proof}
Observe that $f^N(f^i(R)) = f^i(f^N(R)) = f^i(R)$. If $x\in E^+_{f^N|W}(A_N\cap W)$, then $x\in f^i(R)$ for some $i\in\{0,\ldots,N-1\}$ and by $f^N$-invariance of $f^i(R)$ we have
$$
\ovl{\mathcal{O}_{f^N|{f^i(R)}}(x)} \cap (A_N \cap f^i(R)) = 
\ovl{\mathcal{O}_{f^N|{f^i(R)}}(x)} \cap A_N = 
\ovl{\mathcal{O}_{f^N}(x)} \cap A_N = 
\emptyset.
$$
Hence, 
$$
E^+_{f^N|W}(A_N\cap W) \subset \disp\bigcup_{i=0}^{N-1}E^+_{f^N|{f^i(R)}}(A_N\cap f^i(R)).
$$ 

On the other hand, let $x$ be a point in the set on the right hand side, that is, let $x \in E^+_{f^N|{f^i(R)}}(A_N\cap f^i(R))$ for some $i\in\{0,\ldots,N-1\}$ and, in particular, $x \in f^i(R)$. Again, by $f^N$-invariance of $f^i(R)$, we have  
$$
\ovl{\mathcal{O}_{f^N}(x)}\cap A_N = 
\ovl{\mathcal{O}_{f^N|{f^i(R)}}(x)}\cap A_N =
\ovl{\mathcal{O}_{f^N|{f^i(R)}}(x)}\cap (A_N \cap f^i(R)) = 
\emptyset.
$$
This finishes the proof.
\end{proof}

Finally, in this section we give a relation for the entropy of the sets $A_N$, which we will need right after.

\begin{lema}\label{lem:propriE4}
If  $h( f|_W,A) < h(f|_W)$ then $h(f^N|_R,A_N\cap R) < h(f^N|_R)$.
\end{lema}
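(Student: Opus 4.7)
The plan is to chain together the elementary entropy properties~(E1)--(E5) and Lemma~\ref{lem:simple} in order to deduce the strict inequality
\[
	h(f^N|_R, A_N \cap R)
	\le N \cdot h(f|_W, A)
	< N \cdot h(f|_W)
	= h(f^N|_R).
\]
All three ingredients of this chain are essentially bookkeeping on top of the listed properties.

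First, I would establish the auxiliary equality $h(f|_W, A_N) = h(f|_W, A)$. The inclusion $A \subset A_N$ together with monotonicity~(E5) gives one direction. For the other, writing $A_N = \bigcup_{j=0}^{N-1} f^{-j}(A)$ and applying countable stability~(E3) yields
\[
	h(f|_W, A_N)
	= \max_{0 \le j \le N-1} h(f|_W, f^{-j}(A) \cap W).
\]
For each such $j$, iterating property~(E2) $j$ times gives $h(f|_W, f^{-j}(A)\cap W) = h(f|_W, f^j(f^{-j}(A)\cap W))$, and since $f^j(f^{-j}(A)\cap W) \subset A$, monotonicity~(E5) bounds this last quantity by $h(f|_W, A)$.

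Next, property~(E4) applied to the dynamics $f|_W$ gives $h(f^N|_W, A_N) = N \cdot h(f|_W, A_N) = N \cdot h(f|_W, A)$. Since $A_N \cap R \subset A_N \subset W$, a further application of~(E5) yields $h(f^N|_W, A_N \cap R) \le h(f^N|_W, A_N)$. Finally, invoking the intrinsic character of Bowen entropy on the compact $f^N$-invariant set $R$ — precisely the same ingredient used implicitly in the proof of Lemma~\ref{lem:simple} when passing from $h(f^N|_W, R)$ to $h(f^N|_R)$ — one has $h(f^N|_R, A_N \cap R) = h(f^N|_W, A_N \cap R)$. Combining these bounds with Lemma~\ref{lem:simple}, which provides $h(f^N|_R) = N \cdot h(f|_W)$, delivers the desired strict inequality, using the hypothesis $h(f|_W, A) < h(f|_W)$ in the final step.

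I do not foresee any substantive obstacle; the only point that requires some care is tracking which ambient compact set and which iterate of the map is being used at each invocation of~(E2), (E4), and (E5), particularly when switching from the $f$-dynamics on $W$ to the $f^N$-dynamics on $R$.
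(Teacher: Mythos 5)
Your argument is correct and follows essentially the same chain as the paper's proof: establish $h(f|_W,A_N\cap W)=h(f|_W,A)$, pass to $f^N$ via~(E4), restrict to $R$ by monotonicity, and close with Lemma~\ref{lem:simple}. The only cosmetic difference is that you apply~(E4) to $A_N$ before intersecting with $R$, whereas the paper intersects with $R$ first; you also spell out more explicitly (via (E2), (E3), (E5)) why $h(f|_W,A_N\cap W)=h(f|_W,A)$, a step the paper attributes somewhat tersely to~(E2) alone.
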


\begin{proof}
Starting from our hypothesis,
\begin{eqnarray*}
h(f|_W) & > & h(f|_W,A)\\
  \text{ by (E2) and~\eqref{eq:W} }\quad
	& = & h( f|_W,A_N\cap R) 
	= h\Big( f|_W, A_N\cap\bigcup_{i=0}^{N-1}f^i(R)\Big)  \\
  \text{ by (E5) }\quad
	& \ge &  h( f|_W,A_N\cap R)\\
  \text{ by (E4) and~\eqref{eq:W} }\quad
             & = & \disp\frac{1}{N} h( f^N|_W,A_N \cap R)
             = \frac{1}{N}h(f^N|_R,A_N\cap R)
\end{eqnarray*}
Hence, applying Lemma~\ref{lem:simple} we obtain the claimed property.
\end{proof}

\section{Proof of the main results}\label{prova}

We first establish a preparatory result for the entropy of a continuous transformation  that can be decomposed into finite systems each being conjugate to a subshift of finite type. 

\begin{prop}\label{proph}
Let $(W,d)$ be a compact metric space and $f\colon W\to W$ a continuous transformation. 
	Let $R\subset W$ be a compact set satisfying $f^N(R)=R$ and $W=\bigcup_{i=0}^{N-1}f^i(R)$  for some $N\ge1$ and suppose that $f^N\colon R\to R$ is conjugate to a subshift of finite type. 
	
	Then for every compact set $A\subset W$ satisfying $h(f|_W,A)<h(f|_W)$ we have
	\[
		h\big(f|_W,E^+_{f|W}(A)\big) 
		= h(f|_W).
	\] 
\end{prop}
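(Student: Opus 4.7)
The plan is to reduce Proposition~\ref{proph} to Dolgopyat's Theorem~\ref{Dolgoteoshift} applied to the subshift of finite type conjugate to $f^N|_R$, and then to transport the full-entropy conclusion from the iterate on $R$ back to the dynamics on $W$. The general machinery of Section~\ref{Aset} will handle the translation between exceptional sets for $f|_W$ and for $f^N|_R$, while Lemma~\ref{lem:simple} converts entropies of $f^N|_R$ into entropies of $f|_W$.

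First I would pass from $A\subset W$ to the auxiliary compact set $A_N\cap R$, where $A_N=\bigcup_{j=0}^{N-1}f^{-j}(A)$. Three points are needed: by Lemma~\ref{lem:invsubsetneeew}, $E^+_{f|W}(A)=E^+_{f^N|W}(A_N\cap W)$; since $R$ is closed and $f^N$-invariant, any $f^N$-orbit starting in $R$ stays in $R$, so that
\[
E^+_{f^N|R}(A_N\cap R)\subset E^+_{f^N|W}(A_N\cap W)=E^+_{f|W}(A);
\]
and, by Lemma~\ref{lem:propriE4}, the hypothesis $h(f|_W,A)<h(f|_W)$ translates into $h(f^N|_R,A_N\cap R)<h(f^N|_R)$.

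Next, let $\pi\colon R\to\Sigma_M^+$ be the topological conjugacy between $f^N|_R$ and a subshift of finite type $\sigma$, and set $B:=\pi(A_N\cap R)$. Property (E1) gives $h(\sigma,B)=h(f^N|_R,A_N\cap R)<h(f^N|_R)=h(\sigma)$, so Theorem~\ref{Dolgoteoshift} yields $h(\sigma,E^+_{\sigma|\Sigma_M^+}(B))=h(\sigma)$. Invoking Lemma~\ref{lempropriE1} (which identifies $\pi(E^+_{f^N|R}(A_N\cap R))$ with $E^+_{\sigma|\Sigma_M^+}(B)$) and (E1) once more, we conclude
\[
h(f^N|_R,E^+_{f^N|R}(A_N\cap R))=h(f^N|_R).
\]

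Finally, combining the inclusion above with monotonicity (E5), with iteration invariance (E4) in the form $h(f|_W,Y)=\tfrac{1}{N}h(f^N|_W,Y)$, and with Lemma~\ref{lem:simple} in the form $h(f^N|_R)=Nh(f|_W)$, one obtains
\[
h(f|_W,E^+_{f|W}(A))\;\ge\;\tfrac{1}{N}\,h(f^N|_R,E^+_{f^N|R}(A_N\cap R))\;=\;\tfrac{1}{N}\,h(f^N|_R)\;=\;h(f|_W).
\]
The reverse inequality follows immediately from (E5) since $E^+_{f|W}(A)\subset W$. The only technical subtlety worth flagging is the identification $h(f^N|_W,Y)=h(f^N|_R,Y)$ for $Y\subset R$ used in the last chain, which is standard for Bowen's definition of entropy once $R$ is closed and $f^N$-invariant; apart from this the argument is pure bookkeeping using the lemmas of Section~\ref{Aset}.
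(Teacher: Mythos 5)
Your proof is correct and actually streamlines the argument in the paper. Both proofs follow the same backbone: pass to the iterate $f^N$ on $R$, use Lemma~\ref{lem:propriE4} to transfer the entropy hypothesis, invoke Dolgopyat's Theorem~\ref{Dolgoteoshift} through the conjugacy $\pi$ together with Lemma~\ref{lempropriE1} and property (E1) to get $h\big(f^N|_R,E^+_{f^N|R}(A_N\cap R)\big)=h(f^N|_R)$, and then translate back using (E4) and Lemma~\ref{lem:simple}. Where you diverge is in the step carrying this back to $E^+_{f|W}(A)$. The paper works with the \emph{equality} $h\big(f^N|_W,E^+_{f^N|W}(A_N\cap W)\big)=Nh(f|_W)$, and to establish it decomposes the exceptional set over the pieces $f^i(R)$ via Lemmas~\ref{lempropriE1.2} and~\ref{lempropriE2} and then maximizes over $i$ using countable stability (E3). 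You instead observe that since $R$ is compact and $f^N$-invariant, $E^+_{f^N|R}(A_N\cap R)\subset E^+_{f^N|W}(A_N\cap W)=E^+_{f|W}(A)$ (which is precisely Lemma~\ref{lem:invsubset} applied to $f^N$, $R\subset W$, and $A_N\cap W$), and that a \emph{lower} bound on $h\big(f|_W,E^+_{f|W}(A)\big)$ suffices because the reverse inequality is trivial by monotonicity (E5). This bypasses Lemmas~\ref{lempropriE1.2} and~\ref{lempropriE2} entirely and is genuinely shorter. The one point you flag, that $h(f^N|_W,Y)=h(f^N|_R,Y)$ for $Y\subset R$ with $R$ compact and $f^N$-invariant, is needed in both proofs (the paper uses it implicitly in the chain beginning $h(f^N|_R)=h\big(f^N|_W,E^+_{f^N|R}(A_N\cap R)\big)$), and it is indeed a standard feature of Bowen's definition: orbits of points in $R$ never leave $R$, so covers of $W$ restricted to $R$ and covers of $R$ yield the same dimension-like quantity $m_{\mathscr{A},d}$.
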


\begin{proof} 
By hypothesis, there is a subshift of finite type $\sigma\colon\Sigma_M^+\to\Sigma_M^+$ and a homeomorphism $\pi\colon \Sigma_M^+\to R$ satisfying $\pi\circ f^N=\sigma\circ\pi$. 

By hypothesis and Lemma~\ref{lem:propriE4} we have 
$h( f^N|_R,A_N\cap R) < h(f^N|_R).$ 
By the conjugation property (E1) of entropy we have 
$
h( \sigma,\pi^{-1}(A_N\cap R)) < h(\sigma).
$ 
By Theorem \ref{Dolgoteoshift}, we have that 
\[
	h\big( \sigma,E^+_{\sigma|\Sigma^+_M}(\pi^{-1}(A_N\cap R))\big) 
	= h(\sigma).
\]
From Lemma \ref{lempropriE1}  and property (E1),  we conclude   
\[
	h\big( f^N|_R,E^+_{f^N|R}(A_N\cap R)\big) 
	= h(f^N|_R).
\]
By $f^N$-invariance of $R$, properties  (E2) and (E5) of  entropy, Lemma \ref{lempropriE1.2}, Lemma \ref{lempropriE2}, and Lemma \ref{lem:simple}
\[\begin{split}
h(f^N|_R) & =  h\big( f^N|_W,E^+_{f^N|R}(A_N\cap R)\big) \\
        & =  h\big( f^N|_W,f^i(E^+_{f^N|R}(A_N\cap R))\big) \\
        & \leq  h\big( f^N|_W,E^+_{f^N|f^i(R)}(A_N\cap f^i(R))\big)\\
        & \leq  h(f^N|_R).
\end{split}\]
Thus, by Lemma \ref{lempropriE2}, $f^N$-invariance of $R$,  (E3), and Lemma \ref{lem:simple}, it follows  
\[\begin{split}
h\big( f^N|_W,E^+_{f^N|W}(A_N\cap W)\big) 
	& = \max_{0\leq i \leq N-1} h\big( f^N|_{f^i(R)},E^+_{f^N|{f^i(R)}}(A_N \cap f^i(R))\big)\\
	& = h(f^N|_{R})\\ 
	& =  N h(f|_W). 
\end{split}\]
Then, by Lemma~\ref{lem:invsubsetneeew} and property (E4), it follows that
\[\begin{split}
h(f|_W,E^+_{f|W}(A))
  & =  h\big( f|_W,E^+_{f^N|W}(A_N\cap W)\big) \\
 & =  \frac{1}{N} h\big( f^N|_W,E^+_{f^N|W}(A_N\cap W)\big) \\ 
 & =  h(f|_W), 
\end{split}\]
and this finishes the proof of the proposition. 
\end{proof}

Now we can give the proofs of Theorem \ref{teoentropy} and Theorem  \ref{teoprinc}. 

\begin{proof}[Proof of Theorem \ref{teoentropy}]
By hypothesis, $h(f|_J)>0$. 
By the variational principle and Ruelle's inequality, for every $\epsilon>0$ there is an ergodic measure $\mu$ satisfying $\chi(\mu)>0$ and 
$h_\mu(f) \ge h(f|_J)  - \epsilon$. 
By Theorem \ref{Katrin1}, there is a compact set $W_\epsilon\subset J$ such that 
$h(f|_{W_\epsilon})\ge h_\mu(f) - \epsilon$. 
If $\epsilon$ was sufficiently small, this and our hypothesis  $h(f|_J, A) < h(f|_J)$ together imply  $h(f|_{W_\epsilon}, A\cap W_\epsilon) < h(f|_{W_\epsilon})$.
Then, by Proposition~\ref{proph}, the above inequalities, and observing that $E^+_{f|{W_\epsilon}}(A\cap W_\epsilon)\subset E^+_{f|J}(A)$, we have   
\[\begin{split}
	h(f|_J) & \leq  h_{\mu}(f) + \epsilon \le h(f|_{W_\epsilon})+2\epsilon\\
       & =  h( f|_{W_\epsilon},E^+_{f|W_\epsilon}(A\cap W_\epsilon)) + 2\epsilon\\
       & \leq  h( f|_J,E^+_{f|J}(A)) + 2\epsilon.
\end{split}\]  
Since $\epsilon$ was arbitrary,  this implies the claim.
\end{proof}

\begin{proof}[Proof of Theorem \ref{teoprinc}]
Consider the sequences $(\mu_n)_n$, $(\epsilon_n)_n$ and $(W_{n})_n$ from Lemma \ref{lemaqeaprox}. In particular, $\epsilon_n<\chi(\mu_n)/n$ and thus
\[
	\lim_{n\to\infty}\frac{\chi(\mu_n)-\epsilon_n}
		{\chi(\mu_n)+\epsilon_n}
	=1.
\]
By  hypothesis we have 
\[
	\dim_{\rm H}A<\DD(f|_J).
\]
Hence, for $n$ sufficiently large we have (the first inequality is simple)
\[
	\dim_{\rm H}(A\cap W_{n})
	\le \dim_{\rm H}A
	<\frac{\chi(\mu_n)-\epsilon_n}{\chi(\mu_n)+\epsilon_n}\dim_{\rm H}W_{n}
	\leq \DD	(f|_J).
\]
Applying Corollary~\ref{proplema 2.0cor}, the above inequality, and again Corollary~\ref{proplema 2.0cor}, we obtain
\[\begin{split}
	h( f|_{W_{n}},A\cap W_{n}) 
	& \le  (\chi(\mu_n) + \epsilon_n)\dim_{\rm H} (A\cap W_{n})\\ 
 & <  (\chi(\mu_n) - \epsilon_n)\dim_{\rm H}  W_{n}\\
  &\leq  h(f|_{W_{n}}).  
\end{split}\]
Hence, we can apply Proposition~\ref{proph} and obtain 
\[
	h\big( f|_{W_{n}},E^+_{f|W_{n}}(A\cap W_{n})\big) 
	= h(f|_{W_{n}}). 
\]
Together with  Lemma \ref{afirm2.1.1} applied to $W=W_{n}$ and $Y= E^+_{f|W_{n}}(A\cap W_{n})$ this implies
\[\begin{split}
	\dim_{\rm H} E^+_{f|W_{n}}&(A\cap W_{n})\\
	&\ge 
	\frac{h\big( f|_{W_{n}},
		E^+_{f|W_n}(A\cap W_{n})\big)}
	{h(f|_{W_{n}})}\frac{(\chi(\mu_n) -\epsilon_n)}
		{(\chi(\mu_n) +\epsilon_n)}\dim_{\rm H}W_{n}\\
	&=\frac{(\chi(\mu_n) -\epsilon_n)}
		{(\chi(\mu_n) +\epsilon_n)}\dim_{\rm H}W_{n}	.
\end{split}\]
Lemma \ref{lemaqeaprox} now proves that  
\[
\lim_{n\to\infty} \dim_{\rm H} E^+_{f|W_{n}}(A\cap W_{n}) 
\geq \DD(f|_J)). 
\]
Observe that, by Lemma~\ref{lem:invsubset}
it follows that 
\[
	E^+_{f|W_{n}}(A\cap W_{n}) \subset E^+_{f|J}(A).
\]	 
Now property (H1) of the Hausdorff dimension implies  
$$
\dim_{\rm H} E^+_{f|J}(A) \geq \DD(f|_J)
$$
and proves the theorem.
\end{proof}

\bibliographystyle{amsplain}

\end{document}